\newcommand*{\ARXIV}{}%

\ifdefined\ARXIV
  \documentclass[11pt]{article}
  \pdfoutput=1
\else
  \documentclass[review,onefignum,onetabnum,dvipsnames]{siamart171218}
\fi

\usepackage{latexsym,amssymb}
\usepackage{amsmath,amsfonts}

\usepackage{graphicx, appendix}
\usepackage{color}
\usepackage{amssymb} \usepackage{latexsym} \usepackage{exscale}
\usepackage{epsfig,epstopdf}
\usepackage{verbatim,algorithm,algorithmic}
\usepackage{multirow,multicol,enumitem}
\setlist{nolistsep}
\usepackage{wrapfig,colonequals}
\usepackage[numbers,sort]{natbib}
\usepackage[T1]{fontenc}
\usepackage{graphicx, wrapfig}
\usepackage{color}
\usepackage{subfig}
\usepackage{url}
\usepackage{mathrsfs,mathtools,graphicx}
\usepackage{xspace}
\usepackage{hyperref}
\usepackage{bbm}

\def\calV {{\mathcal V}}
\def\calN {{\mathcal N}}

\def\calC {{\mathcal C}}

\def\calU {{\mathcal U}}

\DeclareMathOperator*{\tr}{tr_\Omega}
\DeclareMathAlphabet{\mathpzc}{OT1}{pzc}{m}{it}

\newcommand{\R}{\mathbb{R}}
\newcommand{\N}{\mathbb{N}}
\newcommand{\ve}{u}

\newcommand{\C}{\mathcal{C}}

\DeclareMathOperator*{\argmax}{argmax}

\renewcommand{\H}{\mathbb{H}}
\newcommand{\Hs}{\mathbb{H}^s(\Omega)}
\newcommand{\Hsd}{\mathbb{H}^{-s}(\Omega)}
\newcommand{\aEIM}{a^{Q_h}_{{\rm EIM}}}

\newcommand{\HL}{ \mbox{ \raisebox{7.2pt} {\tiny$\circ$} \kern-10.7pt} {H_L^1} }

\usepackage[textsize=small]{todonotes}
\newcommand{\HA}[1]{{\color{blue}~\textsf{#1}}}
\renewcommand{\HA}[1]{#1}



\makeatletter
\g@addto@macro\normalsize{%
  \setlength\abovedisplayskip{5pt}
  \setlength\belowdisplayskip{5pt}
  \setlength\abovedisplayshortskip{3pt}
  \setlength\belowdisplayshortskip{3pt}
}
\makeatother

\ifdefined\ARXIV
  \usepackage{geometry}
  \geometry{left=1.2in,right=1.2in,top=1in,bottom=1in}
  \usepackage{hyperref}
  \newcommand{\email}[1]{\href{mailto:#1}{\texttt{#1}}}
  \usepackage{amsthm}
  \newtheorem{remark}{Remark}
  \newtheorem{theorem}{Theorem}
  \newtheorem{proposition}{Proposition}
\else
  \ifpdf
    \DeclareGraphicsExtensions{.eps,.pdf,.png,.jpg}
  \else
    \DeclareGraphicsExtensions{.eps}
  \fi


  \newsiamremark{remark}{Remark}

  \headers{RBM for fractional Laplace equations}{H. Antil, Y. Chen, and A. Narayan}

  \title{Certified reduced basis methods for fractional Laplace equations via extension\thanks{Submitted to the editors DATE.
  \funding{H.~Antil was partially supported by National Science Foundation grants DMS-1818772 and DMS-1521590. Y.~Chen was partially supported by National Science Foundation grant DMS-1719698. A.~Narayan was partially supported by AFOSR FA9550-15-1-0467.}}}

  \author{Harbir Antil\thanks{Department of Mathematical Sciences, George Mason University, Fairfax, Virginia 
    (\email{hantil@gmu.edu}, \url{http://math.gmu.edu/\~hantil/}).}
  \and Yanlai Chen\thanks{Mathematics Department, University of Massachusetts Dartmouth, North Dartmouth, MA. 
    (\email{ychen@umassd.edu}, \url{http://www.faculty.umassd.edu/yanlai.chen/}).}
  \and Akil Narayan\thanks{Department of Mathematics, and Scientific Computing and Imaging Institute, University of Utah, Salt Lake City, UT. 
    (\email{akil@sci.utah.edu}, \url{http://www.sci.utah.edu/\~akil}).}
  }

  \usepackage{amsopn}

  \ifpdf
  \hypersetup{
    pdftitle={RBM for fractional Laplace equations},
    pdfauthor={H. Antil, Y. Chen, and A. Narayan}
  }
  \fi
\fi

\begin{document}

\ifdefined\ARXIV
  \title{Certified reduced basis methods for fractional Laplace equations via extension}
  \author{Harbir Antil\thanks{Department of Mathematical Sciences, George Mason University, Fairfax, Virginia 
    (\email{hantil@gmu.edu}, \url{http://math.gmu.edu/\~hantil/}). H.~Antil was partially supported by National Science Foundation grants DMS-1818772 and DMS-1521590.} \and
  \and Yanlai Chen\thanks{Mathematics Department, University of Massachusetts Dartmouth, North Dartmouth, MA. 
    (\email{ychen@umassd.edu}, \url{http://www.faculty.umassd.edu/yanlai.chen/}). Y.~Chen was partially supported by National Science Foundation grant DMS-1719698.} \and
  \and Akil Narayan\thanks{Department of Mathematics, and Scientific Computing and Imaging Institute, University of Utah, Salt Lake City, UT. 
    (\email{akil@sci.utah.edu}, \url{http://www.sci.utah.edu/\~akil}). A.~Narayan was partially supported by AFOSR FA9550-15-1-0467.}
  }
\else
\fi

\maketitle

\begin{abstract}
  Fractional Laplace equations are becoming important tools for mathematical modeling and prediction. Recent years have shown much progress in developing accurate and robust algorithms to numerically solve such problems, yet most solvers for fractional problems are computationally expensive. Practitioners are often interested in choosing the fractional exponent of the mathematical model to match experimental and/or observational data; this requires the computational solution to the fractional equation for several values of the both exponent and other parameters that enter the model, which is a computationally expensive many-query problem. To address this difficulty, we present a model order reduction strategy for fractional Laplace problems utilizing the reduced basis method (RBM). Our RBM algorithm for this fractional partial differential equation (PDE) allows us to accomplish significant acceleration compared to a traditional PDE solver while maintaining accuracy. Our numerical results demonstrate this accuracy and efficiency of our RBM algorithm on fractional Laplace problems in two spatial dimensions.
\end{abstract}

\ifdefined\ARXIV
\else
  \begin{keywords}
    fractional PDEs, fractional extension problem, reduced basis methods 
  \end{keywords}

  \begin{AMS}
    65N30, 65N99, 35R11
  \end{AMS}
\fi

\section{Introduction}
This paper concerns the development of mathematically rigorous and computationally efficient reduced order modeling for a quintessential nonlocal partial differential equation (PDE): the fractional Laplace equation. Let $\Omega \subset \mathbb{R}^n$ be a bounded domain in $n\ge 1$ dimensions with 
boundary $\partial\Omega$. Given a function $f$ (whose regularity we will specify later), we are interested in the solution $u$ to
\begin{equation}\label{eq:frac_elliptic_pde}
 (-\Delta)^{s} u = f  \quad \mbox{in } \Omega , \quad u = 0  \quad \mbox{on } \partial\Omega, 
\end{equation}
where $(-\Delta)^s$, supplemented with homogeneous Dirichlet boundary conditions, is a fractional Laplace operator, with fractional exponent $s \in (0,1)$. Among the competing mathematical definitions of the fractional Laplacian \cite{antil2017fractional,MR3489634, LCaffarelli_LSilvestre_2007a, MWarma_2015a, MR2238879}, we concentrate on the spectral definition (see Section \ref{ssec:fraclap}).  The mathematical approach we consider in this paper to devise numerical solvers for \eqref{eq:frac_elliptic_pde} is an ``extension'' technique, and directly applies to more generic situations where we replace $(-\Delta)^s$ with $\mathcal{L}^s$ where $\mathcal{L} w = -\mbox{div}(A \nabla w) + c w$ when $c$ is \HA{non-negative} and bounded on $\Omega$, and $A \in L^\infty(\Omega)^{n\times n}$ is symmetric and uniformly positive definite. Extension techniques also apply to much more general operators \cite{kwasnicki_extension_2018}.

Fractional Laplace problems are useful in many contexts, for instance, image denoising \cite{HAntil_SBartels_2017a,GH:14,antil2018sobolev}, phase field models \cite{HAntil_SBartels_2017a, MAinsworth_ZMao_2017a}, electrical signal propagation in cardiac tissue where the occurrence of fractional Laplacian has been experimentally validated  \cite{AOBueno_DKay_VGrau_BRodriquez_KBurrage_2014a}, diffusion of biological species \cite{viswanathan1996levy}. In fact all heat kernels under fairly general assumptions are either equivalent to heat kernels of diffusion (exponential), or heat kernels for $2s$-stable processes (polynomial) \cite{AGrigoryan_TKumagai_2008a}. In particular, \HA{the fractional} Laplacian is a special case of a $2s$-stable process. We also refer to \cite{chen2017heat, MR1974415} for a general description of fractional heat kernels and their relation to stochastic processes.

Several strategies exist for computing solutions to \eqref{eq:frac_elliptic_pde}. We refer to \cite{PRStinga_JLTorrea_2010a} for the so called Stinga-Torrea extension which was originally proposed in $\mathbb{R}^n$ in \cite{SAMolcanov_EOstrovskii_2969a,LCaffarelli_LSilvestre_2007a} and has come to be known as the Caffarelli-Silvestre extension. The idea is to equivalently write \eqref{eq:frac_elliptic_pde} as a ``local'' PDE problem on $\mathcal{C} := \Omega \times (0,\infty)$, which can be solved using standard algorithms. Using this idea, finite element approaches have been developed in \cite{RHNochetto_EOtarola_AJSalgado_2014a, meidner2017hp} by truncating the semi-infinite cylinder $\mathcal{C}$ to a finite cylinder $\mathcal{C}_{y_+}$ for $y_+ > 0$. Such a truncation is justified due to the exponential decay of solution in the extended dimension. It is also possible to circumvent truncation and directly approximate solutions to the local problem on the unbounded domain $\mathcal{C}$ by using a spectral method in the extended direction \cite{ainsworth2017hybrid}. 

An excellent alternative to the extension approach is the Dunford-Taylor integral representation of the inverse of the fractional Laplacian by employing the Balakrishnan formula \cite{KYosida_1995a}. Recently in \cite{ABonito_JEPasciak_2015a} the authors have developed a finite element method to solve \eqref{eq:frac_elliptic_pde} via this strategy. In a nutshell, solving \eqref{eq:frac_elliptic_pde} amounts to solving several (independent) standard Poisson problems. Finally, for completeness, we mention that it is also common \HA{to solve} integral Fractional Laplace problems via direct discretization of the integral kernel, which leads to large, dense matrices that must be manipulated and/or \HA{inverted \cite{delia_fractional_2013,GA:JP15}}.  
 
In summary, a variety of tools and strategies are available to solve \eqref{eq:frac_elliptic_pde}, but each one faces challenges in evaluating the map $s \mapsto u$.  For instance, the extension approach requires us to solve a PDE in an increased spatial dimension; the Dunford-Taylor approach requires multiple PDE solves for each $s \mapsto u$ query; and solving the integral fractional Laplace version of \eqref{eq:frac_elliptic_pde} requires special discretization schemes for singular kernels and results in large, dense system matrices.  In addition, the discretization details for each of these approaches generally depends on the particular value of $s$. In short, the provenance of the difficulty in solving \eqref{eq:frac_elliptic_pde} is not necessarily due to a particular numerical scheme, and instead seems to stem from the fact the $(-\Delta)^s$ is a nonlocal operator. 

Adding to this difficulty is the reality that, in practical scenarios involving modeling and prediction, the value of the fractional exponent $s$ is not known \textit{a priori}. In fact, it is common to infer this value based on available observational or experimental data. For instance, in \cite{AOBueno_DKay_VGrau_BRodriquez_KBurrage_2014a} the order $s$ was determined by comparing the computational results with experimental data. Recently more systematic optimization approaches to determine $s$ have been developed in \cite{sprekels2016new}. In such cases, the map $s \mapsto u$, evaluated by solving \eqref{eq:frac_elliptic_pde}, must be queried many times for several values of $s$. Thus, one is actually interested in the family of solutions $\left\{ u(\cdot; s)\;\; | \;\; s \in (0,1) \right\}$. This is the philosophy we adopt in this paper, and in particular we use the reduced basis method to efficiently compute approximations to this family of solutions. 

The main contributions of this paper are as follows:
\begin{itemize}
  \item Development of a certified reduced basis method (RBM) approach for performing model order reduction on the fractional Laplace problem \eqref{eq:frac_elliptic_pde}. That is, we formulate an algorithm to construct a surrogate $u_N(x)$ that is inexpensive to evaluate for many different values of the fractional order $s$ (and data $f$), where $N$ denotes the number of degrees of freedom used to construct $u_N$.
  \item Empirical evidence to support a hypothesis that the family of solutions to \eqref{eq:frac_elliptic_pde} is compressible, and that this compressibility can be accomplished efficiently with reduced basis algorithms. With RBM, we usually observe that $N \sim 10$ is sufficient to achieve an error tolerance of $10^{-6}$ for $s \in (\epsilon, 1-\epsilon)$ for a universal constant $\epsilon$, and hence the surrogate can attain sufficient error with considerably reduced computational effort. 
  \item Introduction of novel RBM algorithmic strategies to address challenges unique to fractional PDEs. In particular, RBM algorithms require error estimates from PDEs in certain normed spaces, but generally require that the definition of such norms be parameter-independent. We provide analysis specific to the problem \eqref{eq:frac_elliptic_pde} that allows us to devise error estimates that are parameter-independent.
\end{itemize}
In this paper, we choose the spectral definition of the operator $(-\Delta)^s$ and use the extension approach as our underlying (``truth'') solver that is input into an RBM procedure. Our rationale for these choices is flexibility and generality of the approach: Recently in \cite{antil2018sobolev} the authors proposed a (variational) extension problem with $s(x) \in [0,1]$, i.e., a spatially varying $s$ including the extreme values 0 and 1. It is shown that using this approach one can approximate functions with jump discontinuities. In the context of image denoising, this approach produces results \HA{better than} the popular total variation-based strategies. Therefore, solving \eqref{eq:frac_elliptic_pde} with spatially-varying $s$ is an emerging problem of interest. Currently neither the Dunford-Taylor nor the integral fractional Laplacian allows $x$-dependent exponent $s$, but the extension approach does allow this. We do not in this article consider $s$ as a function of $x$, but our use of the extension approach implies that algorithm principles here can be extended to this case in future work.  

The following is an overview of this paper: Section \ref{sec:background} introduces notation and tools that we require for our procedure. The novel theoretical and algorithmic portions of this paper are presented in section \ref{sec:mor}. Numerical results demonstrating the efficiency and accuracy of our model order reduction approach are given in section \ref{sec:results}.

\section{Background}\label{sec:background}
We assume throughout that the domain $\Omega$ is Lipschitz polygonal/polyhedral, and consider a small generalization of the problem \eqref{eq:frac_elliptic_pde}:
\begin{align}\label{eq:ppde}
  (-\Delta)^{s} u(\cdot;\mu) = f(\cdot;\nu)  \quad \mbox{in } \Omega , \quad u = 0  \quad \mbox{on } \partial\Omega ,
\end{align}
where the parameter $\mu \in \R^p$ is defined as
\begin{align}\label{eq:mu-def}
  \mu &\coloneqq (s, \nu) \in D, & D \coloneqq (0,1) \times E \subset \R^p.
\end{align}
The right-hand side function $f(\cdot;\nu)$ is a parameterized function that is given as input, and the full parameter space is $p$-dimensional. For simplicity of notation, we will subsequently write $f(\cdot;\mu)$, even though $f$ depends only on the components $\nu$ of $\mu$. Our goal is to develop an efficient and accurate procedure for accomplishing evaluation of the map $\mu \mapsto u(\cdot, \mu)$ for many values of $\mu$. The remainder of this section reviews three major foundational ingredients we require: the definition of the fractional Laplacian, the solution to \eqref{eq:ppde} via an extension problem, and the reduced basis method.

\subsection{Function spaces}\label{ssec:fun-spaces}
The space $L^2(\Omega)$ is the collection of square-integrable functions $f: \Omega \rightarrow \R$ over $\Omega$. Consider the standard (negative) Laplacian $(-\Delta)$ defined over $\Omega$. Via the spectral theorem, we can identify a countable sequence of $L^2(\Omega)$-orthonormal and complete eigenfunctions, $\varphi_k$, satisfying
\begin{align}\label{eq:eigdef}
  -\Delta \varphi_k &= \lambda_k \varphi_k \quad \mbox{in } \Omega, 
  \quad \varphi_k = 0 \quad \mbox{on } \partial\Omega, & k &\in \N,
\end{align}
where $0 < \lambda_1 \le \lambda_2 \le ... \le \lambda_k \le ...$ are the eigenvalues of $-\Delta$. Therefore, any $u \in L^2(\Omega)$ has the convergent expansion
\begin{align}
\label{eq:specdef}
  u &= \sum_{k=1}^\infty u_k \varphi_k, & u_k &\coloneqq \left\langle u, \varphi_k \right\rangle_{L^2(\Omega)}.
\end{align}
We can use the spectrum of the Laplacian to define fractional Sobolev spaces:
\begin{align*}
  \H^s(\Omega) &= \left\{ u = \sum_{k=1}^\infty u_k \varphi_k \in L^2(\Omega) \; : \;
   \|u\|_{\mathbb{H}^s(\Omega)}^2 := \sum_{k=1}^\infty \lambda_k^s u_k^2 < \infty \right\} .
\end{align*}
With this definition, we have $L^2(\Omega) = \H^0(\Omega)$. We will let $\H^{-s}(\Omega)$ denote the dual space of $\H^s(\Omega)$.

The spaces $\H^s$ are in general distinct from the classical fractional Sobolev spaces $H^s$, defined as
\begin{align*}
  H^s(\Omega) := \left\{ u \in L^2(\Omega) \; : \; \int_\Omega \int_\Omega \frac{|u(x)-u(y)|^2}{|x-y|^{n+2s}} \; dxdy < \infty \right\},
\end{align*}
and endowed with the norm 
\begin{align*}
  \|u\|^2_{H^s(\Omega)} &\coloneqq  \|u\|_{L^2(\Omega)}^2 + |u|^2_{H^s(\Omega)}, & 
  |u|^2_{H^s(\Omega)} &\coloneqq \int_\Omega \int_\Omega \frac{|u(x)-u(y)|^2}{|x-y|^{n+2s}} \; dxdy .
\end{align*}
In order to draw an equivalence between $\H^s(\Omega)$ and $H^s(\Omega)$, we require two additional spaces, the first being the $H^s$-closure of $\mathcal{D}(\Omega)$, the space of infinitely continuously differentiable functions with compact support on $\Omega$,
\begin{align*}
  H^s_0(\Omega) \coloneqq \overline{\mathcal{D}(\Omega)}^{H^s(\Omega)} .
\end{align*}
We also need the Lions-Magenes space \cite{MR2328004}, 
\HA{
\begin{align*}
  H^{\frac12}_{00}(\Omega) &\coloneqq \left\{ u \in H^{\frac12}(\Omega) \; : \; \int_\Omega \frac{u^2(x)}{\mbox{dist}(x,\partial\Omega)}\; dx < \infty \right\}, \\ 
  \|u\|^2_{H^{\frac12}_{00}(\Omega)} &= \|u\|^2_{H^\frac12(\Omega)} + \int_\Omega \frac{u^2(x)}{\mbox{dist}(x,\partial\Omega)}\; dx.
\end{align*}
The spaces $\H^s(\Omega)$ and $H^s(\Omega)$ are connected by the relation:
\begin{align*}
  \mathbb{H}^s(\Omega) = \left\{ \begin{array}{ll} 
                                   H^s(\Omega) = H^s_0(\Omega) & \mbox{if } 0 < s < \frac12 , \\
                                   H^{\frac12}_{00}(\Omega)  & \mbox{if } s = \frac12 , \\
                                   H^s_0(\Omega)             & \mbox{if } \frac12 < s < 1. 
                                 \end{array} \right.
\end{align*}
}

\subsection{The (spectral) fractional Laplacian}\label{ssec:fraclap}
There are different definitions of the operator $(-\Delta)^s$; in this manuscript we are concerned with the \textit{spectral} definition. The spectral definition of $(-\Delta)^s$, for $s \ge 0$, operating on $u \in \mathcal{D}(\Omega)$ is
\begin{align}
\label{eq:frac_spec}
    (-\Delta)^s u \coloneqq \sum_{k=1}^\infty u_k \lambda_k^{s} \varphi_k ,
\end{align}
where $u_k$ and $\lambda_k$ are as in \eqref{eq:eigdef} and \eqref{eq:specdef}. By density, this definition can be extended to any $u \in \mathbb{H}^s(\Omega)$. We refer to \cite{antil2017fractional} for the spectral definition of $(-\Delta)^s$ in case of
nonzero boundary conditions. For our definition of the spectral Laplacian, we have $(-\Delta)^s : \H^s(\Omega) \rightarrow \H^{-s}(\Omega)$.

This spectral definition of the fractional Laplacian is difficult to use as explicitly shown above since \HA{it} requires computation of the spectrum and eigenfunctions of the Laplacian. In particular, one can  equivalently write $(-\Delta)^s$ in an integral form \cite{MR1974415,MR3489634} which 
immediately implies that $(-\Delta)^s$ is a nonlocal operator. 

Our ultimate goal is to develop a model-order reduction approach using the reduced basis method that allows efficient multi-query evaluation of $s \mapsto u$, where $u$ is defined by the solution to \eqref{eq:frac_elliptic_pde}. In RBM algorithms, the particular nature in which $s$ enters the PDE affects the computational efficiency of the approach; in \eqref{eq:frac_elliptic_pde}, the equation does \textit{not} exhibit affine dependence on $s$, which makes application of RBM algorithms challenging. We therefore first reformulate the problem as one where the parameter $s$ appears in a more convenient form.

\subsection{The Dirichlet-to-Neumann map and extension problem}\label{ssec:extension}
A hallmark result allows one to rewrite the nonlocal operator as a local one; the price paid is an increase in spatial dimension. \HA{The} first results in this direction in unbounded domains are \cite{SAMolcanov_EOstrovskii_2969a,LCaffarelli_LSilvestre_2007a}, although many generalizations followed in bounded domains culminating in our version below \cite{PRStinga_JLTorrea_2010a,XCabre_JTan_2010a,ACapella_JDavila_LDupaigne_YSire_2011a}. Consider $\mathcal{C}$, the cylindrical extension of $\Omega$ to $\R^{n+1}$, with lateral boundary $\partial_L \mathcal{C}$,
\begin{align*}
  \mathcal{C} &\coloneqq \Omega \times (0, \infty) \subset \R^{n+1}, & \partial_L \mathcal{C} \coloneqq \left\{ (x,y) \in \partial \Omega \times [0, \infty)  \right\}.
\end{align*}
We will continue to use the notation that $x \in \Omega \subset \R^n$ denotes the spatial variable as in the problem \eqref{eq:ppde}, and $y \in [0, \infty)$ is a scalar coordinate in the extended ($(n+1)$st) dimension. The extended problem on $\mathcal{C}$ that pairs with \eqref{eq:ppde} is the PDE
\begin{subequations}\label{eq:cs-extension}
\begin{align}
  -\mathrm{div} \left( h(y;s) \; \nabla \calU(x,y) \right) &= 0, & (x,y) &\in \mathcal{C} \\
  \calU(x,y) &= 0, & (x,y) &\in \partial_L \mathcal{C} \\\label{eq:cs-extension-neumann}
  \frac{\partial \calU(x,0)}{\partial n^a} &= d_s f(x;\nu), & (x,y) &\in \Omega \times \{ 0\},
\end{align}
\end{subequations}
where \HA{
\begin{align}\label{eq:h-def}
  h(y;s) &\coloneqq y^{a(s)}, & \frac{\partial \calU(x,0)}{\partial n^a} &= -\lim_{y \rightarrow 0^+} y^a \partial_y \calU(x,y),
\end{align}
}
and the coefficients $a$ and $d_s$ explicitly depend on $s$,
\begin{align}\label{eq:ad-def}
  a = a(s) &\coloneqq 1 - 2s \in (-1,1), & d_s &= 2^a \frac{\Gamma(1-s)}{\Gamma(s)},
\end{align}
with $\Gamma(\cdot)$ the Euler Gamma function. In \eqref{eq:cs-extension}, $\calU : \R^{n+1} \rightarrow \R$ is the unknown weak solution which we must compute, and the operators $\mathrm{div}$ and $\nabla$ are with respect to the extended variable $(x,y) \in \R^{n+1}$. The connection between the solutions of \eqref{eq:ppde} and \eqref{eq:cs-extension} is established via the trace of $\calU$ on the cylinder bottom:
\begin{align}\label{eq:extension-solution}
  \calU(x, 0;\mu) = u(x;\mu).
\end{align}
The appropriate function space for $\calU$ is 
\begin{align*}
  \HL(y^a,\calC) &\coloneqq \left\{ w \in H^1(y^a,\calC) : \; w = 0 \mbox{ on } \partial_L\calC \right\}, & 
  \|w\|^2_{H^1(y^a,\calC)} \coloneqq \sum_{|\delta|\le 1} \|D^\delta w\|_{L^2(y^a,\C)}^2.
\end{align*}
where $D^\delta$ denotes a weak derivative of order $\delta \in \N_0^d$, where we have used multi-index notation.

We emphasize that \eqref{eq:cs-extension} is a \textit{local} PDE, involving only local operators. It is, however, an $(n+1)$-dimensional problem with a singular/degenerate diffusion coefficient $y^a$. 
The relation \eqref{eq:extension-solution} provides a strategy for numerical computation of the solution to the original nonlocal problem \eqref{eq:frac_elliptic_pde} on $\Omega \subset \R^n$: First compute the solution to the local problem \eqref{eq:cs-extension} on $\mathcal{C} \subset \R^{n+1}$, and subsequently restrict the computed solution to the base of the cylinder $\mathcal{C}$. More importantly for us, the parameter $s$ appears in \eqref{eq:cs-extension} (through $a$ and $d_s$) in a form that is far more convenient for RBM algorithms, and will therefore allow us to accomplish efficient model order reduction. The next section describes the tool we use for model order reduction: the reduced basis method.

\subsection{The Reduced Basis Method}\label{ssec:rbm}
The reduced basis method is a strategy for model order reduction of parametric partial differential equations. Consider a PDE of the form 
\begin{align}\label{eq:rbm-pde}
  \mathcal{L}(\calU; \mu) = f(\mu) \quad \mbox{in } \mathcal{C} , \quad u = 0  \quad \mbox{on } \partial\Omega.
\end{align}
Here, $\mu \in D \subset \R^p$ is a Euclidean parameter that encodes variability in the problem and its solution. We assume that for each $\mu$, the solution $\calU$ lies in a Hilbert space $H$. In this paper, we have $H = \HL(y^a, \calC)$ and $\calU^\calN$ is the solution to a discretized version of \eqref{eq:cs-extension}. RBM algorithms start by assuming availability of an expensive \textit{truth} solution or approximation, which is typically a discretized solution from a finite-dimensional space. As with many RBM algorithms, we will adopt a finite element approximation for this procedure. Let $X_h$ be an $\calN$-dimensional finite element space, where $h$ is the mesh parameter. Let $\calU^\calN(\cdot;\mu)$ denote a computable function from $X_h$ that approximates $\calU(\cdot;\mu) \in H$. Typical assumptions are that $\calN \gg 1$, and that $\calU^\calN(\cdot;\mu)$ is accurate but expensive to compute for each $\mu$.  We will make concrete choices for $X_h$, $\calN$, and $\calU^\calN$ via a finite element approach later. 

RBM solutions seek to provide efficient and accurate approximations to the family of solutions $\calU^\calN(\cdot;\mu)$, $\mu \in D$. Hence, $\calU^\calN: \mathcal{C} \times D \rightarrow \R$, with $\calU^\calN(\cdot;\mu) \in X_h \subset H$, so we define 
\begin{align*}
  M &\coloneqq \left\{ \calU^\calN(\cdot; \mu) \;\; \big|\;\; \mu \in D \right\} \subset X_h,
\end{align*}
as the manifold of (approximate) solutions to \eqref{eq:rbm-pde}.\footnote{Here we assume scalar-valued solutions, but RBM algorithms apply equally well to systems of PDEs, \textit{mutatis mutandis}.} The operation $\mu \mapsto \calU^\calN(\cdot; \mu)$ involves the one-time solution to \eqref{eq:rbm-pde}, and can be expensive if $\mathcal{L}$ is complicated or if the dimension $\calN$ is large. 

Thus, the cost $K$ (e.g., computational expense) can be large, scaling algebraically with $\calN$. If, for example, $\mu \mapsto \calU^\calN$ requires inversion of a generic dense linear system of size $\calN$, then $K \sim \calN^3$. RBM algorithms compute an approximate solution, $\calU_N : \mathcal{C} \times D \rightarrow \R$, where evaluation $\mu \mapsto \calU_N(\cdot, \mu)$ has cost $k \ll K$. This approximate solution $\calU_N$ has the following form, along with an error certification provided by a tolerance $\epsilon$:
\begin{align}\label{eq:rbm-def-and-certification}
  \calU_N(\cdot,\mu) &= \sum_{n=1}^N \calU^\calN(\cdot; \mu_n) c_n(\mu) \in X_h, &
  \left\| \calU^\calN(\cdot;\mu) - \calU_N(\cdot;\mu) \right\|_{X_h} &< \epsilon .
\end{align}
The positive integer $N$ denotes the number of degrees of freedom in the RBM solution $\calU_N$. Large $N$ gives rise to a more accurate solution (smaller $\epsilon$), but requires a larger computational cost $k$ for evaluation. The computational cost $k$ for evaluation of $\calU_N$ is cubic in $N$, and the optimal behavior of $\epsilon$ as a function of $N$ is governed by the Kolmogorov $N$-width of the manifold $M$ in the Hilbert space $X_h$:
\begin{align*}
  d_N(M) = \inf_{\dim V = N} \sup_{\calU \in M} \inf_{v \in V} \left\| \calU - v \right\|_{X_h}.
\end{align*}
Above, $V$ is an $N$-dimensional subspace of $X_h$. The value of $d_N$ is the optimal (smallest) error committed by a linear $N$-term approximation of $M$. In many cases, the $N$-width of a solution manifold decays exponentially in $N$. We assume that $d_N$ for our fractional Laplace problem decays very quickly (e.g., algebraically with high order, or exponentially) with respect to $N$. Numerical experiments suggest that this is true for some nonlocal problems \cite{witman_reduced-order_2016}. Under appropriate assumptions, RBM algorithms create a surrogate $\calU_N$ satisfying \eqref{eq:rbm-def-and-certification}, with 
\begin{align*}
  \epsilon \lesssim \sqrt{d_{N/2}(M)}
\end{align*}
See \cite{binev_convergence_2011,devore_greedy_2013}.

In computational implementations, RBM algorithms are split into two phases: an ``offline'' phase, where the $\mu_n$ are determined and the \textit{snapshots} $\calU(\cdot; \mu_n)$ are computed and stored, requiring $N$ queries of the original, expensive model $\calU$, at a cost scaling like $K N$. In the ``online'' phase, given some $\mu$, the function $\calU_N$ defined in \eqref{eq:rbm-def-and-certification} by evaluating the coefficients $c_n(\mu)$ at a cost $k$. When multiple evaluations of $\calU$ are required at numerous values of $\mu$, the savings gained in the online stage can substantially outweigh the one-time expensive offline phase investment \cite{Rozza_Huynh_Patera, BinevCohenDahmenDevorePetrovaWojtaszczyk, CHMR_Sisc, HesthavenRozzaStamm2015, HaasdonkOhlberger}.

The dimension $\calN$ of the space $X_h$ and the complexity $N$ of the reduced basis approximation \eqref{eq:rbm-def-and-certification}, satisfy $N \ll \calN$ in practice. The truth approximation is expensive (having cost $K$), and we seek to develop an RBM algorithm that queries the truth solver as few times as possible. The equation \eqref{eq:rbm-pde} should generically satisfy two assumptions so that an RBM strategy can be computationally efficient: (i) Computable \textit{a posteriori} error estimates should exist, enabling efficient and accurate selection of the $\mu_n$ during the offline phase, and (ii) the operator $\mathcal{L}$ should have \textit{affine} dependence on the parameter $\mu$ so that only efficient, $\calN$-independent operations are required during the online phase. We make the latter point more explicit in the next section.

\section{MOR for fractional Laplace problems}\label{sec:mor}

This section proposes and discusses our algorithm for parametric model order reduction of the fractional Laplacian problem \eqref{eq:ppde}. Assume that the data $f$ in \eqref{eq:ppde} is bounded,
\begin{align*}
  \sup_{\mu \in D} \| f(\cdot;\mu)\|_{\H^{-s}(\Omega)} < \infty.
\end{align*}
The form of the PDE \eqref{eq:ppde} cannot easily be used to perform RBM-based model order reduction with respect to the parameter $\mu$; in particular, the operator dependence on $s$ is troublesome. The main hurdle in application of an RBM algorithm to \eqref{eq:ppde} is that the parameter $s$ appears with ``non-affine'' dependence in the operator $(-\Delta)^s$. A straightforward RBM algorithm would require an approximate equality of the form,
\begin{align}\label{eq:fraclap-affine}
  (-\Delta)^s u \approx \sum_{j=1}^Q a_j(\mu) L_j(u),
\end{align}
where $a_j$ are $x$-independent functions, and $L_j$ are $\mu$-independent operators. Such an approximate equality is necessary to formulate efficient algorithms. It is unclear how such an approximation above can be accomplished directly for this operator. 

We can partially address this problem by performing model order reduction not on the original fractional Laplacian problem, but instead on the extended problem outlined in Section \ref{ssec:extension}. This idea is the core of the strategy in this paper. The extension applied to \eqref{eq:ppde} results in the new parametric PDE \eqref{eq:cs-extension}. Unlike \eqref{eq:ppde}, this new parametric PDE \eqref{eq:cs-extension} \textit{can} be tackled with existing RBM approaches after appropriate modifications. A many-query solution of \eqref{eq:frac_elliptic_pde} may become prohibitively expensive when the dimension of the problem before extension is $2$ or $3$, and thus model order reduction techniques are particularly needed here.

The parametric bilinear (weak) form of \eqref{eq:cs-extension} is, given a test function $\phi \in \HL(y^a,\calC)$, find $\calU \in \HL(y^a,\calC)$ such
that 
\begin{subequations}\label{eq:parabilinear_s}
\begin{align}
  a(\calU, \phi; \mu) &= f(\phi;\mu), & & \\\label{eq:a-def}
  a(\calU, \phi; \mu) &\coloneqq \frac{1}{d_s} \int_{\C} h(y;s) \nabla \calU \cdot \nabla \phi, &  f(\phi;\nu) &= \langle f(\cdot;\mu), \tr \phi \rangle_{\Hsd \times \Hs}. 
\end{align}
\end{subequations}
We concentrate on designing efficient and accurate RBM algorithms associated to this bilinear weak form. The particulars of this construction are complicated by the facts that (i) the form \eqref{eq:parabilinear_s} is \textit{not} affine in $\mu = (s,\nu)$, but \textit{can} be approximated in a fashion similar to \eqref{eq:fraclap-affine}, and that (ii) $a(\ve, \phi;\mu)$ induces a natural norm (due to the Poincar\'e inequality) \HA{on $H^1\left(\mathcal{C}, y^{1 - 2s}\right)$}, that is parameter-dependent, i.e., $s$-dependent.

\subsection{Nonaffine-to-affine transformations}\label{ssec:eim}
The main hurdle to applying RBM in a straightforward fashion is that $a(\ve, \phi; \mu)$ does not exhibit affine dependence on the parameter $\mu$. Affine dependence is required for efficient implementation of RBM through an \textit{offline-online} decomposition.

\subsubsection{Empirical interpolation}
One of the standard approaches to address non-affine parametric bilinear forms is to approximate them as a sum of affine bilinear forms. One strategy for accomplishing this approximation is the \textit{Empirical Interpolation Method} (EIM) \cite{Barrault_Nguyen_Maday_Patera}. Applied to the function $h(y; s)$ in \eqref{eq:h-def}, this takes the form
\begin{align}
  h(y; s) &\approx h^{q}_{\mathrm{EIM}}(y;s) \coloneqq \sum_{j=1}^{q} \theta_{j,q}(s) h_j(y), & h_j(y) &\coloneqq h(y; s_j),
\label{eq:h_eim}
\end{align}
where the samples $\{s_1, \dots, s_{q}\}$ and the functions $\{\theta_{j,q}(s)\}_{j = 1}^{q}$ are computable with a cost dependent only on $q$. In practice, we take $q = Q_h$   that is $\mathcal{O}(10)$. Note that forming the approximation above does \textit{not} require any computational or theoretical analysis of the unknown solution $\calU$. The quality of approximation $\approx$ above with respect to the $y$ variable is frequently measured in the $L^\infty\left([0, \infty)\right)$ norm, and can generally be constructed with high precision using a small value of $Q_h$. Specifically, the strategy in \cite{Barrault_Nguyen_Maday_Patera} constructs this approximation via the successive iteration and computation for $q = 1, \ldots, Q_h-1$,
\begin{align*}
  s_{q+1} &= \argmax_{s \in (0,1)} \left\| h^{q}_{\mathrm{EIM}}(\cdot;s) - h(\cdot;s) \right\|_{L^\infty([0, \infty))},
\end{align*}
and the $\theta_{j,q}$ functions are cardinal Lagrange functions satisfying $\theta_{j,q}(s_i) = \delta_{j,i}$. The existence and uniqueness of the functions $\theta_{j,q}$ is guaranteed for all values of $q$ for which the objective under the $\argmax$ achieves a non-zero value. The EIM construction is defined by a function space in which $h$ has membership. As shown above, we are performing an EIM approximation on the function space $L^\infty\left( (0, 1)\;, \; L^\infty\left([0, \infty) \right)\right)$. The particular construction of an EIM approximation in this space is complicated by the singular dependence in the $y$ variable at $y=0, \infty$. To partially address this issue, we perform EIM on a $y$-truncated version \eqref{eq:h_eim}. I.e., we define a new EIM approximation by replacing the enclosing function space for $h$ by a version that truncates the $y$ variable:
\begin{align*}
  h \in L^\infty\left( (0, 1)\;, \; L^\infty\left([0, \infty) \right)\right) \hskip 5pt \longrightarrow \hskip 5pt h \in L^\infty\left( (0, 1)\;, \; L^\infty\left([y_-, y+] \right)\right),
\end{align*}
for small and large values of $y_-$ and $y_+$, respectively, and discretize the domain $[y_-, y+]$. In addition, we treat dependence on the $s$ variable in a piecewise fashion to accommodate the different behavior of $h(y; s)$ for $s \le \frac{1}{2}$, and for $s > \frac{1}{2}$. We first define $D_1 \coloneqq (0, \frac{1}{2}]$ and $D_2 \coloneqq (\frac{1}{2},1)$ and introduce two function spaces on them
\begin{align}
\label{eq:F-def}
  F(D_i) &\coloneqq L^\infty((0,1), L^\infty([y_-^i, y_+])), & h &\in F(D_i) & i \in \{1, 2\},
\end{align}
where $y_-^1$ and $y_-^2$ are different small truncation values for $y$ that we will specify in the numerical results section. Next, we construct $h_{q,i}$ and $\theta_{j,q,i}$ as EIM approximations in the space $F(D_i)$:
\begin{subequations}\label{eq:h-eim-def}
\begin{align}
  h(y;s) &= y^{1-2s} \stackrel{F(D_1)}{\approx} \sum_{j=1}^{Q_{h,1}} \theta_{j,q,1}(s) h_{q,1}(y), & s \in D_1  \\
  y h(y;s) &= y^{2-2s} \stackrel{F(D_2)}{\approx} \sum_{j=1}^{Q_{h,2}} \theta_{j,q,2}(s) h_{q,2}(y), & s \in D_2.
\end{align}
\end{subequations}
The special treatment of multiplication by $y$ on $F(D_2)$ is key as it allows us to perform stable and efficient empirical interpolation on a function, $y h(y;s)$, that is bounded at the origin when $s > \frac{1}{2}$. Finally, we are ready to define the piecewise EIM approximation of $h(y; s)$ by taking \HA{$Q_h = Q_{h,1} + Q_{h,2}$}:
\begin{align}
\label{eq:heim-subintervals}
  h(y;s) &\approx h^{Q_h}_{\mathrm{EIM}}(y;s) \coloneqq \sum_{j=1}^{Q_h} h_q(y) \theta_q(s) \nonumber\\\nonumber
         & = \sum_{j=1}^{Q_{h,1}} h_{q,1}(y) \theta_{j,q,1}(s) \mathbbm{1}_{D_1}(s) + \sum_{j=1}^{Q_{h,2}} h_{q,2}(y) \theta_{j,q,2}(s) \mathbbm{1}_{D_2}(s)\\
&  = \left\{ \begin{array}{ll} 
      \sum_{j=1}^{Q_{h,1}} h_{q,1}(y) \theta_{j,q,1}(s), & s \in D_1, \quad y \in [y_-^1, y_+] \\
      \sum_{j=1}^{Q_{h,2}} y^{-1} h_{q,2}(y) \theta_{j,q,2}(s), & s \in D_2, \quad y \in [y_-^2, y_+],\end{array}\right.
\end{align}
We emphasize that computation and evaluation of \eqref{eq:heim-subintervals} is entirely $\calU^\calN$-independent and requires analysis of only the function $h(y;s)$. 

\subsubsection{A bilinear form with affine parametric dependence}
Having constructed $h^{Q_h}_{EIM}$, we now replace $h$ in \eqref{eq:a-def} by $h^{Q_h}_{EIM}$ and define a new bilinear form as an amendment to \eqref{eq:a-def},
\HA{
\begin{align*}
  a^{Q_h}_{\mathrm{EIM}}(\calU, \phi; \mu) &\coloneqq \frac{1}{d_s} \int_{\C} h^{Q_h}_{EIM}(y;s) \nabla \calU \cdot \nabla \phi, &  f(\phi;\mu) &= \langle f(\cdot;\mu), \tr \phi \rangle_{\Hsd \times \Hs}.
\end{align*}
}
Thus our new bilinear form has the decomposition
\begin{equation}\label{eq:a-eim}
\aEIM (\ve, \phi; \mu)= \sum_{q=1}^{Q_h} \theta_q(s) a_q(\ve, \phi) \quad \mbox{ with } \quad a_q(\ve, \phi) = a(\ve, \phi; s_q).
\end{equation}
Using this bilinear form instead of $a(\cdot,\cdot;\mu)$ in \eqref{eq:parabilinear_s} results in a weak form that is amenable to efficient offline-online decomposition. 

Finally, we justify our truncation to the values $y_{\pm}$ in the $y$ variable. Truncation to a sufficiently large upper bound of $y_+$ has theoretical justification due to exponential decay of $\calU$ in $y$ \cite[Proposition~3.1]{RHNochetto_EOtarola_AJSalgado_2014a}, i.e.,
\begin{align*}
  \|\calU(\cdot,y;\mu)\|_{H^1(y^a,\Omega\times (y_+,\infty))} &\lesssim e^{-\sqrt{\lambda_1}y_{+}/2} \|f\|_{\Hsd}, 
\end{align*}
where $\lambda_1$ is the first eigenvalue of $-\Delta$ as defined in \eqref{eq:eigdef}. The lower truncation at $y = y_-$ is likewise reasonable since numerical approximations of $\calU^\calN$ use \textit{interior} quadrature rules for the variable $y$ to approximate the integral in the bilinear form in \eqref{eq:parabilinear_s}; thus, integration over $[0, y_+]$ does not require evaluation of the integrand at $y=0$. Therefore, the choice of the truncation parameter $y_-$ can be, for example, the value of the smallest node in a quadrature rule used to compute the integrand in the bilinear form.

\subsection{The truth and RBM approximations}\label{ssec:fem}
This section more rigorously defines the RBM truth approximation $\calU^\calN$ that we use, 
and is a description of the approximation in \cite{RHNochetto_EOtarola_AJSalgado_2014a}. We 
replace the unbounded domain $\calC$ by $\calC_{y_+}$ with $y_+ > 0$, which has ``lateral'' boundary $\partial_L\calC_{y_+} = (\partial\Omega\times [0,y_+]) \cup \left(\Omega \times \{y_+\}\right)$. 
Alternatively, one can also use the discretization from \cite{ainsworth2017hybrid}, where no truncation of $\calC$ in the $y$-direction is needed. Let $\mathcal{T}_\Omega= \{K_j\}_{j=1}^{\#\mathcal{T}_\Omega}$ be a conforming and quasi-uniform triangulation of $\Omega$ where $K_j \subset \mathbb{R}^n$ is an element that 
is, for all $j$, isoparametrically equivalent to either the unit cube or to a unit simplex in 
$\mathbb{R}^n$. Under the assumption that $\#\mathcal{T}_\Omega = M^n$ for some $M > 0$, we deduce that the element size satisfies $h \propto M^{-1}$. 

We define $\mathcal{I}_{y_+} = \{I_m\}_{m=0}^{M-1}$, where $I_m = [y_m,y_{m+1}]$, as an anisotropic partition of $[0,y_{+}]$, so that $[0,y_{+}] = \cup_{m=0}^{M-1} I_m$. We define the edges $y_m$ of the mesh as in \cite{RHNochetto_EOtarola_AJSalgado_2014a,meidner2017hp}:
 \begin{equation}\label{eq:grading}
  y_m = \left(\frac{m}{M}\right)^\gamma , \quad 
   m = 0,\dots, M, \quad \gamma > \frac{1}{s} .
 \end{equation}
This requirement would therefore prescribe different meshes for different values of $s$. Since we require an $s$-independent grid in order to easily compute the approximation for $\calU_N$ in \eqref{eq:rbm-def-and-certification}, we will choose a value of $\gamma$ that is $s$-independent. In particular, we choose a small, fixed value of $s$, say $s_\ast$, and choose $\gamma = 1/s_\ast$ to generate the mesh in \eqref{eq:grading}. This is described in more detail in Section \ref{sec:results}.

Having defined our $s$-independent triangulation, we are now ready to define our finite element space. We will construct a triangulation $\mathcal{T}_{y_+}$ of $\calC_{y_+}$ via a tensor product, $\mathcal{T}_{y_+} \coloneqq \mathcal{T}_\Omega \times \mathcal{I}_{y_+}$. Then we define the finite element space $X_h$ as 
 \begin{align*}
  X_h = X_{h_{\mathcal{T}_\Omega}} 
   := \Big\{\mathcal{V} &\in C^0(\overline{\calC_{y_+}}) \; : \; \mathcal{V}|_T \in 
   \mathcal{P}_1(K) \otimes \mathbb{P}_1(I) , \\ 
   \quad &\forall T = K \times I \in \mathcal{T}_{y_+} \mbox{ and }
   \mathcal{V}|_{\partial_L\calC_{y_+}} = 0 \Big\}.
 \end{align*}
Above, the notation $\mathbb{P}_1(R)$ for a set $R$ is the space of ($\dim R$)-variate polynomials of total degree at most 1. When $K$ is a simplex we let $\mathcal{P}_1(K) = \mathbb{P}_1(K)$. On the other hand, when $K$ is a cube we let $\mathcal{P}_1(K) = \mathbb{Q}_1(K)$, the set of polynomials of degree at most 1 in any variable. In our numerical examples we will use simplices. The norm on $X_h$ is defined as
\begin{align}\label{eq:Xh-norm}
  \|\cdot\|_{X_h} = \|\nabla \cdot\|_{L^2(y^a,\calC_{y_+})} . 
\end{align}

The algorithm in \cite{RHNochetto_EOtarola_AJSalgado_2014a} now prescribes a Galerkin approximation to \eqref{eq:parabilinear_s} from the subspace $X_h$. I.e., the solution $\mathcal{V}^{\calN} \in X_h$ is defined via the condition
\begin{align}\label{eq:VN-solution}
  a(\mathcal{V}^\calN, \phi; \mu) &= f(\phi; \mu), & \forall\;\;\phi &\in X_h.
\end{align}
We refer to \cite{RHNochetto_EOtarola_AJSalgado_2014a} for detailed approximation results. In particular when $\Omega$ is convex, we have
\begin{align*}
  \|\calU(\cdot,0;\mu)-\mathcal{V}^\calN(\cdot,0;\mu)\|_{\Hs} &\le c |\log(h_{\mathcal{T}_\Omega})|^s \; h_{\mathcal{T}_\Omega} \; \|f(\cdot;\mu)\|_{\mathbb{H}^{1-s}(\Omega)} 
\end{align*}
provided $y_{+} \sim \log(\#\mathcal{T}_{y_{+}})$. In both cases the constant $c > 0$ is independent of $h_{\mathcal{T}_\Omega}$.

Our RBM truth approximation is an amendment of the Galerkin prescription above, by replacing $a$ with $a^{Q_h}_{\mathrm{EIM}}$ that is defined in \eqref{eq:a-eim}: Find $\calU^\calN \in X_h$ such that
\begin{align}\label{eq:UN-solution}
  a_{\mathrm{EIM}}^{Q_h}(\calU^\calN, \phi; \mu) &= f(\phi;\mu), & \forall\;\;\phi &\in X_h.
\end{align}
Note that while the approximation $h^{Q_h}_{\mathrm{EIM}}$ defining $a^{Q_h}_{\mathrm{EIM}}$ is built over a $y$-truncated domain $[y_-^i, y_+]$, $i=1, 2$, the finite element approximation takes place on $[0, y_+]$, i.e., it truncates only to an upper value of $y$. Our overall goal is not to compute an approximate solution $\calU^\calN$ to the extended problem, but instead an approximate solution to the original fractional problem \eqref{eq:ppde}. Using \eqref{eq:extension-solution}, we conclude that such an approximation is provided by the trace of this solution on the bottom of the cylinder. Therefore, our final truth solution is
\begin{align}\label{eq:ucalN-def}
  u^\calN(\cdot; \mu) \coloneqq \calU^\calN(\cdot, 0; \mu).
\end{align}
Having defined the truth approximation, we now also define the reduced basis approximation $\calU_N$ introduced in \eqref{eq:rbm-def-and-certification}. With $\mu_1, \ldots, \mu_N$ chosen, then the RBM approximation is the Galerkin approximation from the span of the snapshots $\calU^\calN(\cdot;\mu_n)$: find $\calU_N \in \mathrm{span} \{\calU^\calN(\cdot;\mu_1), \ldots, \calU^\calN(\cdot;\mu_N)\}$ such that
\begin{align}\label{eq:URBM-def}
  a_{\mathrm{EIM}}^{Q_h}(\calU_N, \phi; \mu) &= f(\phi;\mu), & \forall\;\;\phi &\in \mathrm{span} \{\calU^\calN(\cdot;\mu_1), \ldots, \calU^\calN(\cdot;\mu_N)\}.
\end{align}
This condition, with the ansatz \eqref{eq:rbm-def-and-certification}, uniquely defines the coefficients $c_n$ if the snapshots are linearly independent. The RBM Galerkin approximation results in a linear system of size $N$, but requires evaluations of the bilinear form with inputs from the $\calN$-dimensional space $X_h$. Under the affine construction in the previous section, RBM algorithms circumvent $\calN$-dependent complexity in an online phase by arranging all $\calN$-dependent operations into the offline phase. That this is possible depends on the \HA{affine decomposition described} in section \ref{ssec:eim}. We refer to \cite{Rozza_Huynh_Patera, Grepl_Maday_Nguyen_Patera} for details on efficient RBM implementations.

Just as with the truth approximation, the trace of the RBM solution provides an approximation to the solution of \eqref{eq:frac_elliptic_pde}:
\begin{align}\label{eq:uN-def}
  u_N(\mu) \coloneqq \calU_N(\cdot, 0; \mu).
\end{align}

\subsection{Identification of $\mu_n$: \textit{a posteriori} error analysis}\label{sec:analysis}

The parameter values $\mu_1, \ldots \mu_N$ in practical RBM approximations are computed greedily, identified as parameter values that maximize an \textit{a posteriori} error estimate. Ideally, these parameter values are chosen via the greedy optimization
\begin{align}\label{eq:ideal-greedy}
  \mu_{n+1} = \argmax_{\mu \in D} \|e(\mu)\|_{X_h} \coloneqq \argmax_{\mu \in D} \left\| \calU^\calN(\cdot; \mu) - \calU_N(\cdot;\mu) \right\|_{X_h}. 
\end{align}
However, this explicitly requires evaluation of the truth solution $\calU^\calN$. RBM algorithms instead use a finite element \textit{a posteriori} estimate as the objective function, which approximates the error $e(\mu)$. We provide a specialized \textit{a posteriori} error estimate in this section for our problem. The main novelty of our result is that we estimate the error of the solution trace on $\Omega$ instead of in the extended cylinder $\calC_{y_+}$. The former is more natural in this setting since our ultimate goal is to approximate $u$ and not $\calU$.

Due to linearity of the fractional Laplacian and the resulting bilinear form, we have 
\begin{align}\label{eq:residual-def}
  a^{Q_h}_{\mathrm{EIM}} (e(\mu),\phi;\mu) = r(\phi;\mu) \coloneqq f(\phi) - a^{Q_h}_{\mathrm{EIM}} (\calU_N(\mu),\phi;\mu).
\end{align}
The \textit{inf-sup} constant plays a crucial role in existence and uniqueness of solutions to elliptic PDE's,
\begin{align}\label{eq:infsup-constant}
\beta_h (\mu) \coloneqq \inf_{w \in X_h}
\sup_{v \in X_h}\frac{a^{Q_h}_{\mathrm{EIM}}(w,v;\mu)}{\lVert w\lVert_{X_h}\lVert v \lVert_{X_h}}.
\end{align}
The following result provides a bound on the error $e$ on the bottom of the cylinder in terms of the residual $r$. 
\begin{theorem}\label{thm:rbm-error-estimate}
  Assume that a function $\beta_{\mathrm{LB}}(\mu)$ satisfies
  \begin{align}\label{eq:beta-LB-def}
    0 < \beta_{\mathrm{LB}}(\mu) &\leq \beta_h(\mu), & \mu &\in D.
  \end{align}
  Then
  \begin{align}\label{eq:error-bound}
    \| u^\calN(\cdot;\mu) - u_N(\cdot;\mu) \|_{\H^s(\Omega)} = \|\mathrm{tr}_\Omega e(\mu) \|_{\H^s(\Omega)} \leq \frac{\lVert r(\cdot;\mu) \rVert_{X_h '}}{ \sqrt{d_s} \beta_{\rm LB} (\mu)}
  \end{align}
  where $\mathrm{tr}_\Omega g$ for $g \in \HL(y^a,\calC)$, is the trace of $g$ on the cylinder bottom $\Omega$.
\end{theorem}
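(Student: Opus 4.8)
The plan is to decouple the estimate into two independent ingredients: (i) a standard reduced-basis residual bound controlling the full energy error $\|e(\mu)\|_{X_h}$ by the dual residual norm and the inf-sup constant, and (ii) a trace inequality converting the energy norm of \emph{any} $X_h$-function into the $\Hs$-norm of its trace, at the price of the factor $\sqrt{d_s}$. Chaining these and invoking $\beta_{\mathrm{LB}} \le \beta_h$ from \eqref{eq:beta-LB-def} should produce \eqref{eq:error-bound} directly. Observe first that $\tr e(\mu) = u^\calN(\cdot;\mu) - u_N(\cdot;\mu)$ by the definitions \eqref{eq:ucalN-def} and \eqref{eq:uN-def}, so it suffices to bound $\|\tr e(\mu)\|_{\Hs}$.

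For ingredient (i) I would start from the residual identity \eqref{eq:residual-def}, $a^{Q_h}_{\mathrm{EIM}}(e(\mu),\phi;\mu) = r(\phi;\mu)$ for all $\phi \in X_h$. Inserting $w = e(\mu)$ into the inf-sup quotient \eqref{eq:infsup-constant} gives
\begin{align*}
  \beta_h(\mu)\,\|e(\mu)\|_{X_h} \le \sup_{v \in X_h} \frac{a^{Q_h}_{\mathrm{EIM}}(e(\mu),v;\mu)}{\|v\|_{X_h}} = \sup_{v \in X_h} \frac{r(v;\mu)}{\|v\|_{X_h}} = \|r(\cdot;\mu)\|_{X_h'},
\end{align*}
whence $\|e(\mu)\|_{X_h} \le \|r(\cdot;\mu)\|_{X_h'}/\beta_{\mathrm{LB}}(\mu)$ after \eqref{eq:beta-LB-def}. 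This step is routine RBM machinery and uses only the inf-sup characterization of $a^{Q_h}_{\mathrm{EIM}}$.

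The substantive step is ingredient (ii), namely the trace inequality
\begin{align*}
  \sqrt{d_s}\,\|\tr w\|_{\Hs} \le \|w\|_{X_h}, \qquad w \in X_h.
\end{align*}
I would prove this from the Dirichlet (energy-minimization) characterization of the extension. Every $w \in X_h$ vanishes on $\partial_L\calC_{y_+}$, in particular on $\Omega \times \{y_+\}$, so extending $w$ by zero to all of $\calC$ produces a function $\tilde w \in \HL(y^a,\calC)$ with the same trace on $\Omega$ and, because $w$ vanishes at $y=y_+$ (so no interface jump is created), with $\int_\calC y^a|\nabla \tilde w|^2 = \|w\|_{X_h}^2$ by \eqref{eq:Xh-norm}. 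Writing $g = \tr w$, the $a$-harmonic extension $\calU_g$ of $g$ is the minimizer of $\int_\calC y^a|\nabla\cdot|^2$ among all $\HL(y^a,\calC)$-functions with trace $g$, and testing the weak extension equation against $\calU_g$ yields the spectral identity $\int_\calC y^a|\nabla \calU_g|^2 = d_s\,\langle(-\Delta)^s g,g\rangle = d_s\|g\|_{\Hs}^2$. Since $\tilde w$ is a competitor with trace $g$, its energy dominates the minimal energy, which is exactly the claimed inequality. Applying (i) and (ii) with $w = e(\mu)$ then gives \eqref{eq:error-bound}.

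The main obstacle is the sharp constant in ingredient (ii): one must verify that the minimal weighted Dirichlet energy of the extension equals $d_s\|g\|_{\Hs}^2$ exactly (not merely up to an equivalence constant), which rests on the precise spectral computation underlying \eqref{eq:cs-extension}--\eqref{eq:ad-def} and on confirming that the zero-extension across $y=y_+$ contributes no spurious interface energy; this latter point is precisely where the boundary condition $\tilde w|_{\Omega\times\{y_+\}}=0$ enters. Everything else is bookkeeping.
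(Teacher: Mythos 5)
Your proof is correct and takes essentially the same route as the paper's: the identical two-step decomposition into the inf-sup/residual bound $\|e(\mu)\|_{X_h} \le \|r(\cdot;\mu)\|_{X_h'}/\beta_{\mathrm{LB}}(\mu)$ (the paper phrases this step through a Riesz-representation operator $T_\mu$ satisfying $\|T_\mu e(\mu)\|_{X_h} = \|r(\cdot;\mu)\|_{X_h'}$, which is equivalent to your direct use of the inf-sup quotient together with the residual equation \eqref{eq:residual-def}), followed by the trace inequality $\sqrt{d_s}\,\|\tr e(\mu)\|_{\Hs} \le \|e(\mu)\|_{X_h}$. The only difference is one of provenance: the paper imports this trace inequality from the proof of \cite[Proposition~2.1]{ACapella_JDavila_LDupaigne_YSire_2011a}, whereas you supply a self-contained derivation---zero-extension past $y = y_+$ (legitimate precisely because $X_h$ functions vanish on $\Omega \times \{y_+\}$, so no interface energy is created), minimality of the $a$-harmonic extension among competitors with the same trace, and the exact energy identity $\int_\calC y^a |\nabla \calU_g|^2 = d_s \|g\|^2_{\Hs}$---which is sound under the paper's normalization \eqref{eq:ad-def} and is in substance the argument behind the cited result.
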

\begin{proof}
  Define $T_\mu : X_h \rightarrow X_h$ as
  \begin{align*}
    a^{Q_h}_{\mathrm{EIM}}(w, v; \mu) &= \left( T_\mu w, v \right)_{X_h}, & w, v &\in X_h.
  \end{align*}
  Therefore, 
  \begin{align*}
    \beta_{\mathrm{LB}}(\mu) \le \beta_h(\mu) = \inf_{w \in X_h} \sup_{v \in X_h} \frac{a^{Q_h}_{\mathrm{EIM}}(w, v; \mu)}{\|w\|_{X_h} \|v\|_{X_h}} = \inf_{w \in X_h} \frac{\|T_\mu w\|_{X_h}}{\|w\|_{X_h}}.
  \end{align*}
  Taking $w = e(\mu) \in X_h$ under the infimum yields
  \begin{align}\label{eq:eXh}
    \|e(\mu)\|_{X_h} \leq \frac{ \|T_\mu e(\mu)\|_{X_h}}{\beta_{\mathrm{LB}}(\mu)} = \frac{ \|r(\cdot, \mu)\|_{X_h'}}{\beta_{\mathrm{LB}}(\mu)}.
  \end{align}
  We now exercise a trace \HA{estimate, which follows from the proof of \cite[Proposition~2.1]{ACapella_JDavila_LDupaigne_YSire_2011a},} which for our setting states:
  \begin{equation}\label{eq:trace}
    \|\tr e(\mu) \|_{\H^s(\Omega)} \le d_s^{-\frac12}\| e(\mu) \|_{X_h} . 
  \end{equation}
  Combining \eqref{eq:trace} with \eqref{eq:eXh} yields the result.
\end{proof}
The utility of this result is that, while the error $e$ (and its trace) is not computable without the truth approximation $\calU^\calN$, the residual $r$ operator $r(\cdot;\mu)$ can indeed be computed via \eqref{eq:residual-def}. Additionally, our error estimate above applies only to the error on the bottom trace $\Omega$ of $\mathcal{C}$. This is a natural goal-oriented approach since our ultimate desire is not to approximate $\calU$, but instead to approximate $u$.

Such an RBM algorithm then chooses parameter snapshots $\mu_{n+1}$ not via \eqref{eq:ideal-greedy}, but instead via
\begin{align}\label{eq:greedy}
  \mu_{n+1} = \argmax_{\mu \in D} \frac{\lVert r(\cdot;\mu) \rVert_{X_h '}}{ \sqrt{d_s} \beta_{\rm LB} (\mu)}.
\end{align}
As with many RBM algorithms, the computation of $\beta_{\mathrm{LB}}(\mu)$ for all $\mu \in D$ is thus necessary. The relations \eqref{eq:beta-LB-def} and \eqref{eq:infsup-constant} indicate that this essentially requires an estimate from below of the inf-sup constant $\beta_h$. An algorithm that is frequently utilized in RBM for this purpose is the Successive Constraint Method (SCM) \cite{HuynhSCM, HKCHP}. The method resembles RBM by calculating $\beta_h$ at a few strategical locations determined by a greedy procedure, and subsequently devising an efficient linear program to construct a global function $\beta_{\mathrm{LB}}$ satisfying \eqref{eq:beta-LB-def} rigorously. However, the form of $\beta_{\mathrm{LB}}(\mu)$ in this paper is non-standard since the norm \HA{$\|\cdot\|_{X_h}$} depends on $\mu$, see \eqref{eq:Xh-norm}. We therefore need to tailor the SCM procedure accordingly, which is the topic of the next subsection.

\subsection{Fractional successive constraint method}
Our construction of $X_h$ outlined in Section \ref{ssec:fem} requires that the norm of $X_h$ depends on $\mu$ (see \eqref{eq:Xh-norm}, in particular), and in this case a standard SCM method cannot be used to efficiently compute $\beta_{\mathrm{LB}}$.

We ameliorate this situation by using an $s$-independent dominating norm. In other words, suppose we have
\begin{equation}
\label{eq:normrelation}
  \lVert \omega \lVert_{X_h} = \lVert \omega \lVert_{X_h(\mu)} \le \eta(\mu) \lVert \omega \rVert_*,
\end{equation}
for some norm $\|\cdot\|_\ast$ that does not depend on $s$. Then using the affine decomposition of $a^{Q_h}_{EIM}$ in \eqref{eq:a-eim}, we have that $\beta_h$ can be bounded by 
\begin{align}\label{eq:betaast-def}
\beta_h (\mu) = \inf_{\omega \in X_h}
\sup_{v \in X_h}\frac{a_{\mathrm{EIM}}^{Q_h}(\omega,v;\mu)}{\lVert \omega \lVert_{X_h(\mu)}\lVert v \lVert_{X_h(\mu)}} 
\ge \inf_{\omega \in X_h}
\sup_{v \in X_h}\sum_{q = 1}^{Q_h} \frac{\theta_q(\mu)}{{\eta(\mu)^2}}\frac{a^q_h(\omega,v)}{\lVert \omega \lVert_{*}\lVert v \lVert_{*}} \coloneqq \beta_{h\ast}(\mu)
\end{align}
Since the norms $\|\cdot\|_\ast$ are now $\mu$-independent, then we can apply standard SCM-based linear programs. From the inequality above for $\beta_{h\ast}$, the following result is now evident:
\begin{theorem}
  If $\beta_{\mathrm{LB}}(\mu)$ satisfies 
  \begin{align}\label{eq:betaLB-ast}
    0 < \beta_{\mathrm{LB}}(\mu) &\leq \beta_{h\ast}(\mu), & \mu&\in D,
  \end{align}
  then condition \eqref{eq:beta-LB-def} in Theorem \ref{thm:rbm-error-estimate} is satisfied.
\end{theorem}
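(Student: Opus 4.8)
The plan is to observe that the statement is an immediate consequence of the chain of inequalities already assembled in \eqref{eq:betaast-def}, together with the hypothesis \eqref{eq:betaLB-ast}. Concretely, \eqref{eq:betaast-def} asserts $\beta_{h\ast}(\mu) \le \beta_h(\mu)$ for every $\mu \in D$; combining this with the assumed $0 < \beta_{\mathrm{LB}}(\mu) \le \beta_{h\ast}(\mu)$ and transitivity gives $0 < \beta_{\mathrm{LB}}(\mu) \le \beta_h(\mu)$, which is precisely condition \eqref{eq:beta-LB-def}. Thus the proof reduces to a one-line application of transitivity, with the positivity requirement transferring verbatim from the hypothesis. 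I expect no genuine obstacle in the theorem itself, since the heavy lifting is supplied upstream: the theorem merely packages the norm-domination \eqref{eq:normrelation} and the affine structure \eqref{eq:a-eim} into a form usable by the SCM-based lower-bound construction.

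For completeness I would make the inequality $\beta_{h\ast}(\mu) \le \beta_h(\mu)$ fully rigorous, since that is where the actual content lies. Starting from the definition \eqref{eq:infsup-constant} of $\beta_h$ with the parameter-dependent norm $\|\cdot\|_{X_h(\mu)}$, I would insert the affine decomposition $a_{\mathrm{EIM}}^{Q_h}(\omega,v;\mu)=\sum_{q} \theta_q(s)\,a_q(\omega,v)$ from \eqref{eq:a-eim} into the numerator, and then replace each occurrence of $\|\cdot\|_{X_h(\mu)}$ in the denominator using the domination \eqref{eq:normrelation}. Because $\|\cdot\|_{X_h(\mu)} \le \eta(\mu)\|\cdot\|_*$, substituting $\eta(\mu)\|\cdot\|_*$ for $\|\cdot\|_{X_h(\mu)}$ can only enlarge the denominator of the Rayleigh-type quotient and hence can only decrease the quotient, which produces the factor $\eta(\mu)^{-2}$ and the lower bound $\beta_{h\ast}(\mu)$.

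The one delicate point — and the only place requiring care — is the direction of the inequality when the norm substitution is carried out inside $\sup_v$ and then in the outer factor $\|\omega\|_{X_h(\mu)}$. Replacing the parameter-dependent norm by the larger quantity $\eta(\mu)\|\cdot\|_*$ decreases the quotient toward its lower bound only provided the binding numerator is nonnegative. This holds automatically: the inner supremum $\sup_v a_{\mathrm{EIM}}^{Q_h}(\omega,v;\mu)/\|v\|$ is always nonnegative, since for any $v$ with a negative form value the replacement $v \mapsto -v$ yields a positive value at the same norm, so the supremizing direction never has a negative numerator. Consequently the substitution preserves the inequality direction at the binding configuration, and the outer $\inf_\omega$ keeps the bound intact. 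Granting this, the chain $\beta_{\mathrm{LB}}(\mu) \le \beta_{h\ast}(\mu)\le \beta_h(\mu)$ closes the argument, and since $\|\cdot\|_*$ is $\mu$-independent the quantity $\beta_{h\ast}$ is exactly what a standard SCM linear program can bound from below.
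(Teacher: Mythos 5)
Your proof is correct and takes essentially the same route as the paper: the theorem follows immediately by transitivity from the chain $0 < \beta_{\mathrm{LB}}(\mu) \le \beta_{h\ast}(\mu) \le \beta_h(\mu)$, the second inequality being exactly \eqref{eq:betaast-def}, which rests on the norm domination \eqref{eq:normrelation}. Your extra care about the sign of the numerator at the supremizing $v$ (resolved via $v \mapsto -v$ and bilinearity, so the binding numerator is nonnegative and enlarging the denominator indeed lowers the quotient) is a detail the paper leaves implicit when asserting \eqref{eq:betaast-def}, and your handling of it is sound.
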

Note that $\beta_{h\ast}(\mu)$ has affine dependence on $\mu$, so that standard SCM algorithms can be used to construct a function $\beta_{\mathrm{LB}}(\mu)$ satisfying \eqref{eq:betaLB-ast}. See \cite{HuynhSCM, CHMR-M2an} for a detailed description of these algorithms.

It remains for us to establish that it is possible to construct a relation \eqref{eq:normrelation}, that allows us to achieve the inequality \eqref{eq:betaast-def}. We provide one concrete strategy below that identifies a particularly simple form for $\beta_{h\ast}$:
\begin{proposition}
  Assume $\beta_h(\mu) > 0$ for all $\mu$. The function $\beta_{h\ast}$ of the form
  \begin{align}\label{eq:betaast-example}
    \beta_{h\ast}(\mu) &= \inf_{\omega \in X_h} \sup_{v \in X_h} \sum_{q=1}^{Q_h} \widetilde{\theta}_q(\mu) \widetilde{a}_q(w,v),
  \end{align}
  where
  \begin{align*}
    \widetilde{\theta}_q(\mu) \coloneqq \frac{\theta_q(\mu)}{\sqrt{\sum_{j=1}^{Q_h} \theta_j^2(\mu)}}, \quad \widetilde{a}_q(w,v) &\coloneqq \frac{a_q(w,v)}{\|w\|_\ast \|v\|_\ast} ,
    \quad {\|\cdot\|_* := \left( \sum_{q=1}^{Q_h} a(\cdot, \cdot; s_q)^2 \right)^{\frac12}} 
  \end{align*}
  satisfies $\beta_{h\ast}(\mu) \leq \beta_h(\mu)$.
\end{proposition}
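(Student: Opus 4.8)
The plan is to recognize \eqref{eq:betaast-example} as the concrete instantiation of the general domination bound \eqref{eq:betaast-def}, in which the auxiliary norm $\|\cdot\|_*$ and the constant $\eta(\mu)$ have been engineered precisely so that the required norm comparison \eqref{eq:normrelation} collapses to a single Cauchy--Schwarz inequality. Since \eqref{eq:betaast-def} already delivers $\beta_h(\mu)\ge\beta_{h\ast}(\mu)$ once \eqref{eq:normrelation} is in force, the entire task reduces to: (i) checking that $\|\cdot\|_*$ is a bona fide norm on $X_h$; (ii) verifying \eqref{eq:normrelation} for the stated $\|\cdot\|_*$ with $\eta(\mu)^2=\bigl(\sum_{j}\theta_j^2(\mu)\bigr)^{1/2}$; and (iii) confirming that feeding this $\eta$ and this $\|\cdot\|_*$ into \eqref{eq:betaast-def} reproduces verbatim the coefficients $\widetilde\theta_q$ and forms $\widetilde a_q$ of \eqref{eq:betaast-example}.

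First I would record the algebraic identities that drive the construction. Using the affine decomposition \eqref{eq:a-eim}, the operative energy is $\|\omega\|_{X_h(\mu)}^2=a^{Q_h}_{\mathrm{EIM}}(\omega,\omega;\mu)=\sum_{q=1}^{Q_h}\theta_q(\mu)\,a_q(\omega,\omega)$, i.e.\ it is the Euclidean inner product of the coefficient vector $\theta(\mu)=(\theta_q(\mu))_q$ with the \emph{energy vector} $E(\omega)=(a_q(\omega,\omega))_q$. The auxiliary norm is designed so that $\|\omega\|_*^2$ equals the Euclidean length $\bigl(\sum_q a_q(\omega,\omega)^2\bigr)^{1/2}$ of that same vector. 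Definiteness and positivity of $\|\cdot\|_*$ then follow because each $a_q$ is a symmetric, coercive weighted Dirichlet form on $X_h$ (homogeneous lateral data plus Poincar\'e), so $E(\omega)=0$ forces $\omega=0$, while homogeneity and the triangle inequality are inherited from the $\ell^2$ structure. This is the step where the somewhat unusual ``$\ell^2$-of-energies'' form of $\|\cdot\|_*$ must be pinned down.

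Next I would prove \eqref{eq:normrelation}. By the two identities above and the discrete Cauchy--Schwarz inequality applied over the index $q$,
\begin{align*}
  \|\omega\|_{X_h(\mu)}^2 = \langle \theta(\mu), E(\omega)\rangle_{\ell^2} \le \|\theta(\mu)\|_{\ell^2}\,\|E(\omega)\|_{\ell^2} = \Bigl(\sum_{j}\theta_j^2(\mu)\Bigr)^{1/2}\|\omega\|_*^2 ,
\end{align*}
which is exactly \eqref{eq:normrelation} with $\eta(\mu)^2=\bigl(\sum_j\theta_j^2(\mu)\bigr)^{1/2}$. Substituting this $\eta$ and this $\|\cdot\|_*$ into \eqref{eq:betaast-def} turns the prefactor $\theta_q(\mu)/\eta(\mu)^2$ into $\theta_q(\mu)/\bigl(\sum_j\theta_j^2(\mu)\bigr)^{1/2}=\widetilde\theta_q(\mu)$ and the ratio $a_q(\omega,v)/(\|\omega\|_*\|v\|_*)$ into $\widetilde a_q(\omega,v)$, so that the resulting lower bound is literally \eqref{eq:betaast-example}; combined with \eqref{eq:betaast-def} this gives $\beta_{h\ast}(\mu)\le\beta_h(\mu)$.

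The only genuinely delicate point is the inf--sup monotonicity packaged inside \eqref{eq:betaast-def}: enlarging the denominator from $\|\omega\|_{X_h(\mu)}\|v\|_{X_h(\mu)}$ to $\eta(\mu)^2\|\omega\|_*\|v\|_*$ lowers the Rayleigh-type quotient only on the set where the numerator $a^{Q_h}_{\mathrm{EIM}}(\omega,v;\mu)$ is positive. I would therefore invoke the hypothesis $\beta_h(\mu)>0$ --- equivalently, symmetry and positive definiteness of $a^{Q_h}_{\mathrm{EIM}}(\cdot,\cdot;\mu)$ --- to guarantee that for each $\omega$ the inner supremum is attained on the positive cone (for instance already at $v=\omega$), so that the denominator inequality transfers correctly through $\sup_v$ and then through $\inf_\omega$. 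A secondary bookkeeping point to confirm is the identification of the norm in \eqref{eq:infsup-constant} with the EIM energy $a^{Q_h}_{\mathrm{EIM}}(\cdot,\cdot;\mu)$, since that is what makes the Cauchy--Schwarz step exact rather than merely approximate.
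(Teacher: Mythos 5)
Your proof is correct and takes essentially the same route as the paper's: apply discrete Cauchy--Schwarz over the index $q$ to get $\|w\|^2_{X_h(\mu)} = \sum_q \theta_q(\mu)\,a_q(w,w) \le \bigl(\sum_j \theta_j^2(\mu)\bigr)^{1/2}\|w\|_*^2$, i.e.\ \eqref{eq:normrelation} with $\eta(\mu)^2 = \bigl(\sum_j \theta_j^2(\mu)\bigr)^{1/2}$, and then substitute this $\eta$ and $\|\cdot\|_*$ into \eqref{eq:betaast-def} to read off \eqref{eq:betaast-example}. Your closing observation --- that passing from the $\mu$-dependent denominators to the enlarged ones only lowers the quotient where the numerator is positive, so one needs positivity of $a^{Q_h}_{\mathrm{EIM}}(\cdot,\cdot;\mu)$ (e.g.\ taking $v=\omega$ under the supremum) to push the inequality through $\sup_v$ and $\inf_\omega$ --- is a legitimate subtlety that the paper's proof leaves implicit, and your use of the hypothesis $\beta_h(\mu)>0$ to resolve it is exactly the intended role of that assumption.
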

\begin{proof}
  Define the function $\eta$ as
  \begin{align*}
    \eta(\mu)^2 &\coloneqq \left(\sum_{q=1}^{Q_h} \theta^2_q(\mu)\right)^{\frac12}.
  \end{align*}
  Then we have
  \begin{align*}
    \|w\|^2_{X_h(\mu)} &= \aEIM (w, w; \mu)
                =  \sum_{q=1}^{Q_h} \theta_q(s) a_q(w, w)
               \le \left( \sum_{q=1}^{Q_h} \theta_q(s)^2 \right)^{\frac12}
                    \left( \sum_{q=1}^{Q_h} a_q(w, w)^2 \right)^{\frac12} \\
               &= \left( \sum_{q=1}^{Q_h} \theta_q(s)^2 \right)^{\frac12}
                    \left( \sum_{q=1}^{Q_h} a(w, w; s_q)^2 \right)^{\frac12}   
                = {\eta(\mu)^2} \|w\|^2_*
  \end{align*}
  This achieves \eqref{eq:normrelation}, so that the inequality \eqref{eq:betaast-def} holds. Using the expressions for $\eta$ and $\|\cdot\|_\ast$ in the definition of $\beta_{h\ast}$ yields the relation \eqref{eq:betaast-example}.
\end{proof}
\begin{remark}
  The assumption that $\beta_h(\mu) > 0$ for all relevant $\mu = (s,\nu)$ is necessary so that $\|\cdot\|_{X_h(\mu)}$ defines a proper norm. For our problem, we restrict values of $s$ to an interval $[\epsilon, 1 - \epsilon]$ for some $\epsilon > 0$ in practice so that the positivity assumption on $\beta_h$ is reasonable.
\end{remark}
Note that the lemma above provides a generic strategy for bounding a $\beta_h(\mu)$ function having parameter-dependent norms by a parameter-affine function $\beta_{h\ast}$, assuming the operator $\aEIM$ has affine dependence.

\subsection{Algorithm summary}\label{ssec:alg}
We summarize our algorithm for constructing an RBM solution $u_N(\cdot;\mu)$ to \eqref{eq:ppde}. Evaluation of this solution for a given $\mu \in D$ has an asymptotic complexity of $N^3$; in practice $N \sim 10$ is sufficient to attain satisfactory accuracy, which we demonstrate in the numerical results section. Most of the steps in this section are standard in RBM algorithms.

\vskip 10pt
\noindent\textit{\underline{Offline phase}: Select $\mu_n$ and compute $\calU^\calN(\cdot;\mu_n)$ for $n = 1, \ldots, N$.}
\begin{enumerate}
  \item The function $h(y;s)$ defined in \eqref{eq:h-def} \HA{is approximated} by an affine decomposition \eqref{eq:h_eim}, which defines the operator $a_{\mathrm{EIM}}^{Q_h}$ in \eqref{eq:a-eim}. This also allows $\beta_{h\ast}$ to be constructed as in \eqref{eq:betaast-def} or \eqref{eq:betaast-example}, and subsequently enables construction of $\beta_{\mathrm{LB}}$ satisfying \eqref{eq:betaLB-ast} via SCM algorithms.
  \item $\mu_1 \in D$ is selected arbitrarily (usually at random). The solution $\calU^\calN(\cdot; \mu_1)$ to \eqref{eq:UN-solution} is computed.
  \item For $n = 1, \ldots, N-1$, the parameter value $\mu_{n+1}$ is chosen via \eqref{eq:greedy}, and $\calU^\calN(\cdot;\mu_n)$ is computed from \eqref{eq:UN-solution}.
\end{enumerate}
Remarks: 
\begin{itemize}
  \item The EIM decomposition of $h$ in \eqref{eq:h_eim} can be accomplished usually to an extremely small tolerance, see the numerical results section.
  \item Computing the maximum over $D$ in \eqref{eq:greedy} is usually \HA{done} only over a discrete mesh on $D$.
\end{itemize}

\vskip 10pt
\noindent\textit{\underline{Online phase}: given $\mu \in D$, evaluate $u_N(\cdot;\mu)$.}
\begin{enumerate}
  \item Compute $c_n(\mu)$ coefficients via the size-$N$ Galerkin system \eqref{eq:URBM-def}.
  \item Assemble $\calU_N$ via \eqref{eq:rbm-def-and-certification}.
  \item Restrict to the cylinder bottom $\Omega$ to obtain $u_N$, \eqref{eq:uN-def}.
\end{enumerate}
Remarks: 
\begin{itemize}
  \item The Galerkin system \eqref{eq:URBM-def} implemented naively requires $\calN$-dependent operations. Under the assumption of affine dependence of $a$ and $f$ on $\mu$, then this $\calN$-dependent complexity can be completely shifted to the offline phase.
  \item If only $u_N$ is desired, storage of the full solutions $\calU^\calN(\cdot;\mu_n)$ is unnecessary during the offline phase because of the restriction \eqref{eq:uN-def}. In this case, only $u^\calN(\cdot;\mu_n)$ need be stored in the offline phase.
\end{itemize}

\section{Numerical results}\label{sec:results}

In this section we demonstrate the efficacy of the RBM algorithm that we have devised for the fractional Laplace problem \eqref{eq:ppde}. Our test problem will be \eqref{eq:ppde}, either with $\mu = s$, or with $\mu = (s, \nu)$ for a one-dimensional parameter $\nu$. In all cases the spatial domain $\Omega$ is a rectangular two-dimensional set:
\begin{align*}
  \Omega &= [0,1]^2. 
\end{align*}
We use $5,000$ elements on $\Omega$ to form a triangulation $\mathcal{T}_\Omega$, and choose $M = M_{\mathrm {FE}} \coloneqq 158$ in \eqref{eq:grading} to define the partition $\mathcal{I}_{y_+}$ of $[0, y_+]$. The grading parameter $\gamma$ is chosen as $\gamma = 6$ when $s \le \frac{1}{2}$, and $\gamma = 2$ otherwise. We therefore have that the dimension of the truth approximation is $\dim X_h = \calN = 413,559$. 
Here, the values of $y_+$ and of $M$ are determined as linear functions of $\log_{10}$ of the number of elements in $\mathcal{T}_\Omega$ to control the error resulting from the truncation of the $y$-domain \cite{RHNochetto_EOtarola_AJSalgado_2014a}. In our computations, this results in $y_+ = 2.233$.

\subsection{EIM decomposition} 
The first step in our algorithm is to accomplish the decomposition \eqref{eq:h_eim}, i.e., to identify the function $h_q$ and $\theta_q$ in that equation. The grid we use to produce the EIM approximation is constructed in a fashion similar to the finite element grid in \eqref{eq:grading}. Concretely, for discretization of $[y_-, y_+]$, we choose an ``EIM grid'' that is 16 times finer than the finite element grid. I.e., with $M = M_{\mathrm{FE}}$ in \eqref{eq:grading}, we construct a EIM grid via \eqref{eq:grading} using $M = 16 M_{FE}$. Note that this fine resolution in the $y$ direction for EIM does not impact the offline or online efficiency of RBM itself. We take $y_-^1 = 0$ and $y_-^2 = 3.5 \cdot 10^{-7}$, where $y_-^2$ is the first nonzero entry of $\mathcal{I}_{y_+}$. We observe that our choice of graded mesh here produces a much more accurate EIM approximation.

In Figure \ref{fig:eimresult} we demonstrate the EIM accuracy. The lower-right figure shows the accuracy of the EIM approximation error as a function of $Q_h$. We observe exponential decay of the error as $Q_h$ increases. Based on these results, we use $Q_{h,1} = 17$ and $Q_{h,2} = 22$ for the remainder of our numerical tests. 

We empirically observe \HA{that the piecewise} treatment of $h(y; s)$ and the multiplication by $y$ for $s \in D_2$ are necessary; not using the latter approach requires approximation for a nearly-singular function $h(y;s)$ near $y = 0$ and leads to loss of numerical ellipticity of discretized operators. In addition, if we the more finely graded mesh of $D_1$ for computations on $D_2$, then the the first nonzero entry of $\mathcal{I}_{y_+}$ is $8.5 \cdot 10^{-21}$, which again leads to loss of numerical ellipticity. 

\begin{figure}
\begin{center}
    \includegraphics[width=0.49\textwidth]{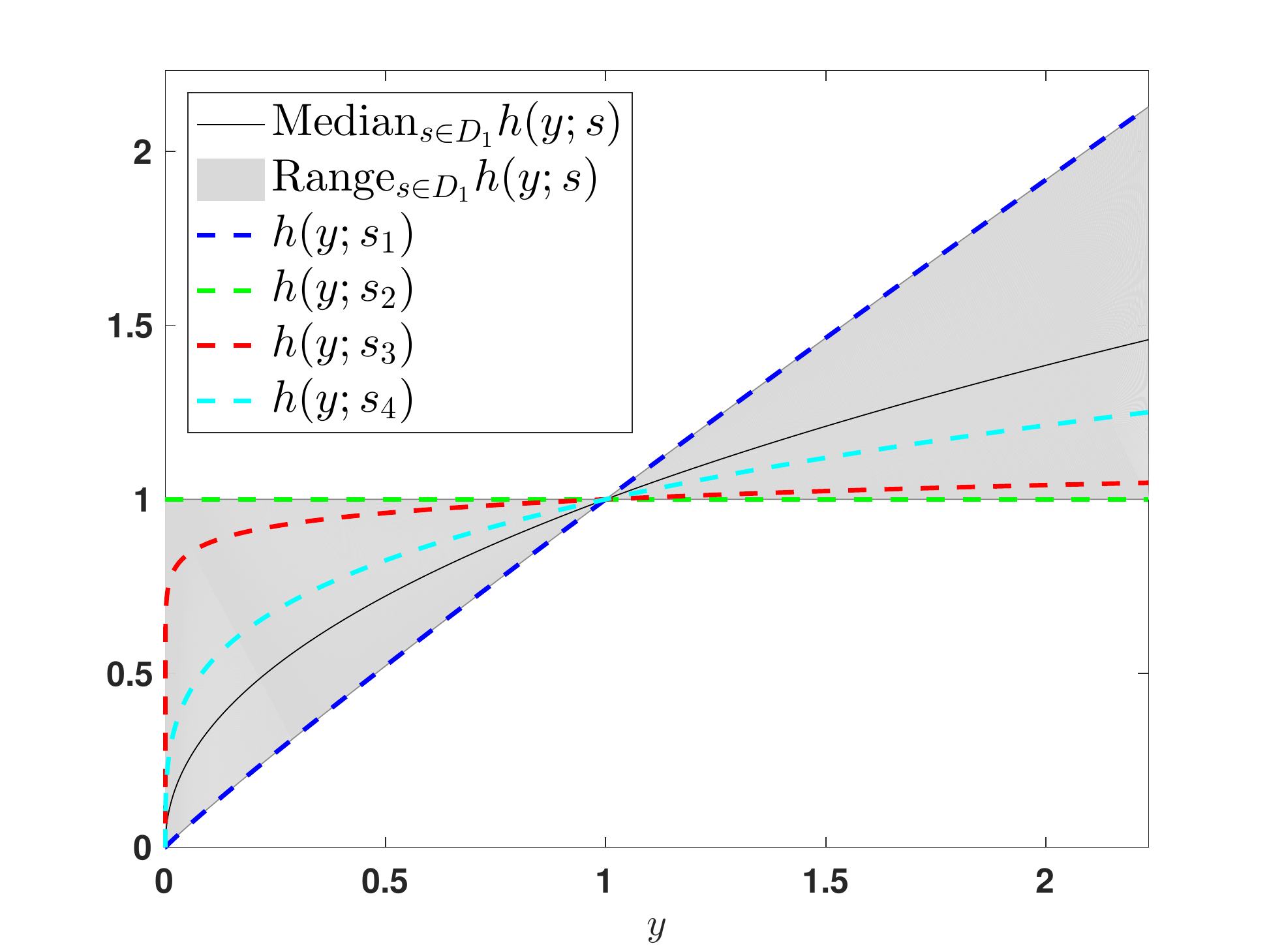}
    \includegraphics[width=0.49\textwidth]{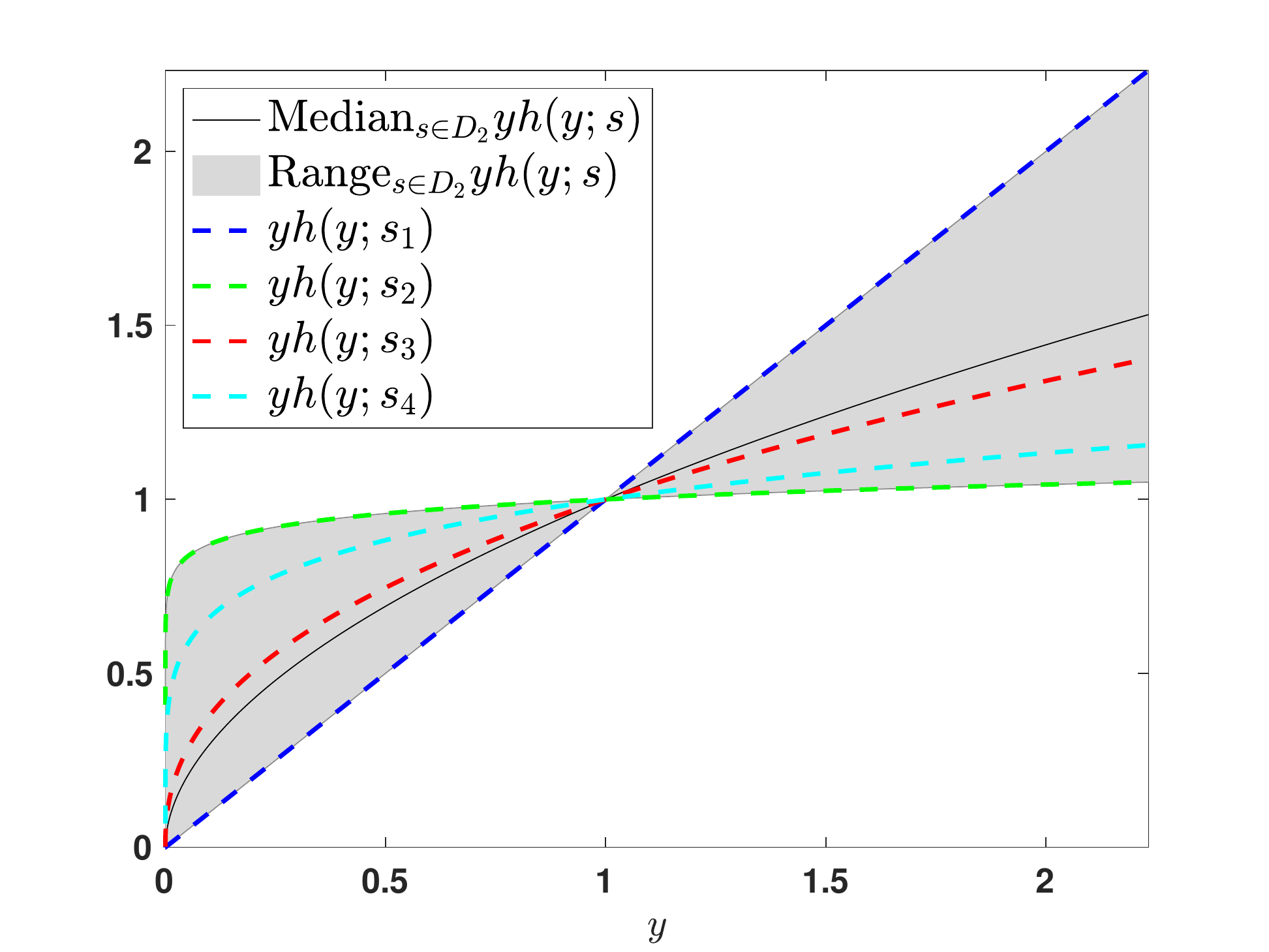}
    \includegraphics[width=0.49\textwidth]{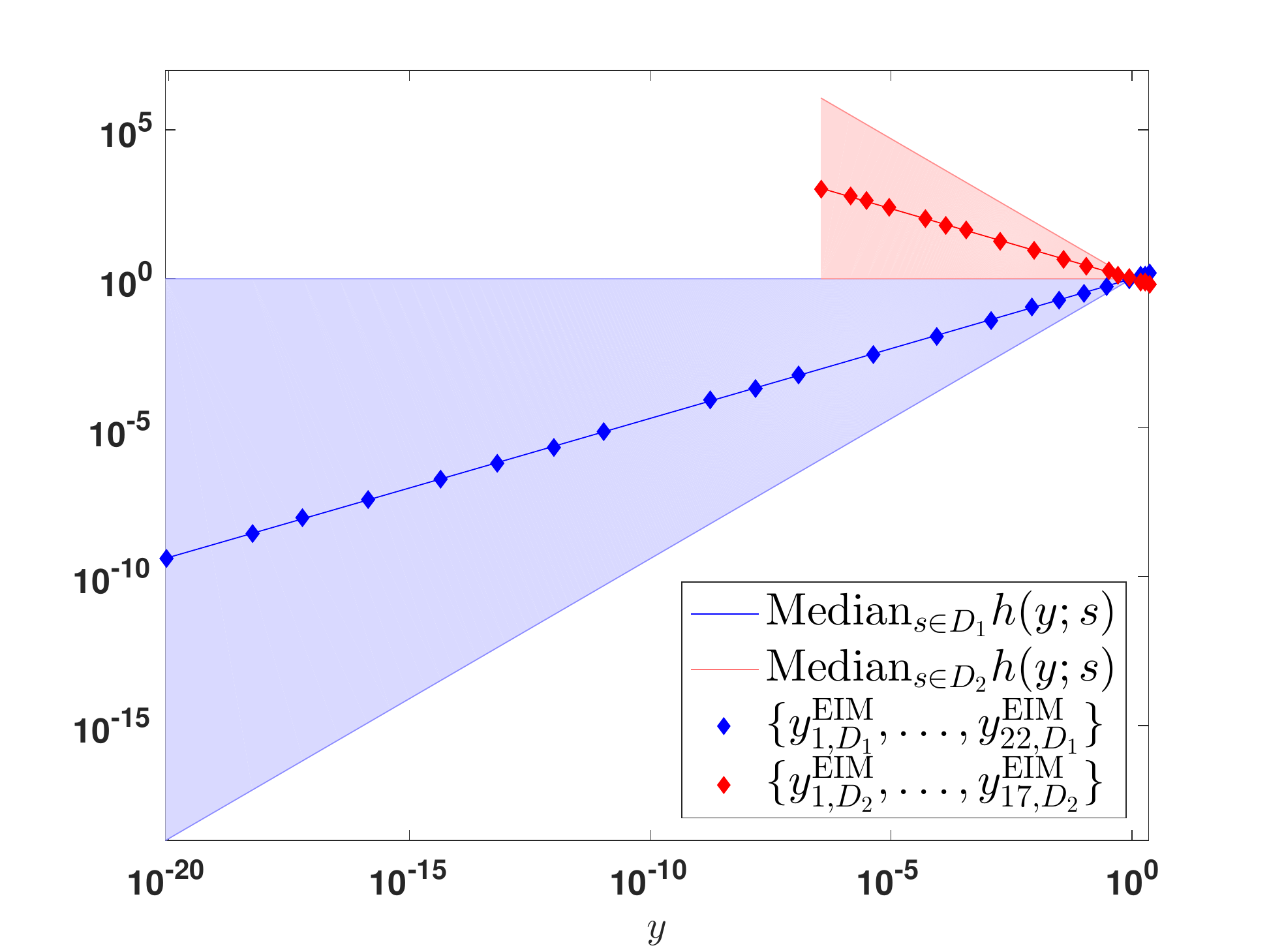}
    \includegraphics[width=0.49\textwidth]{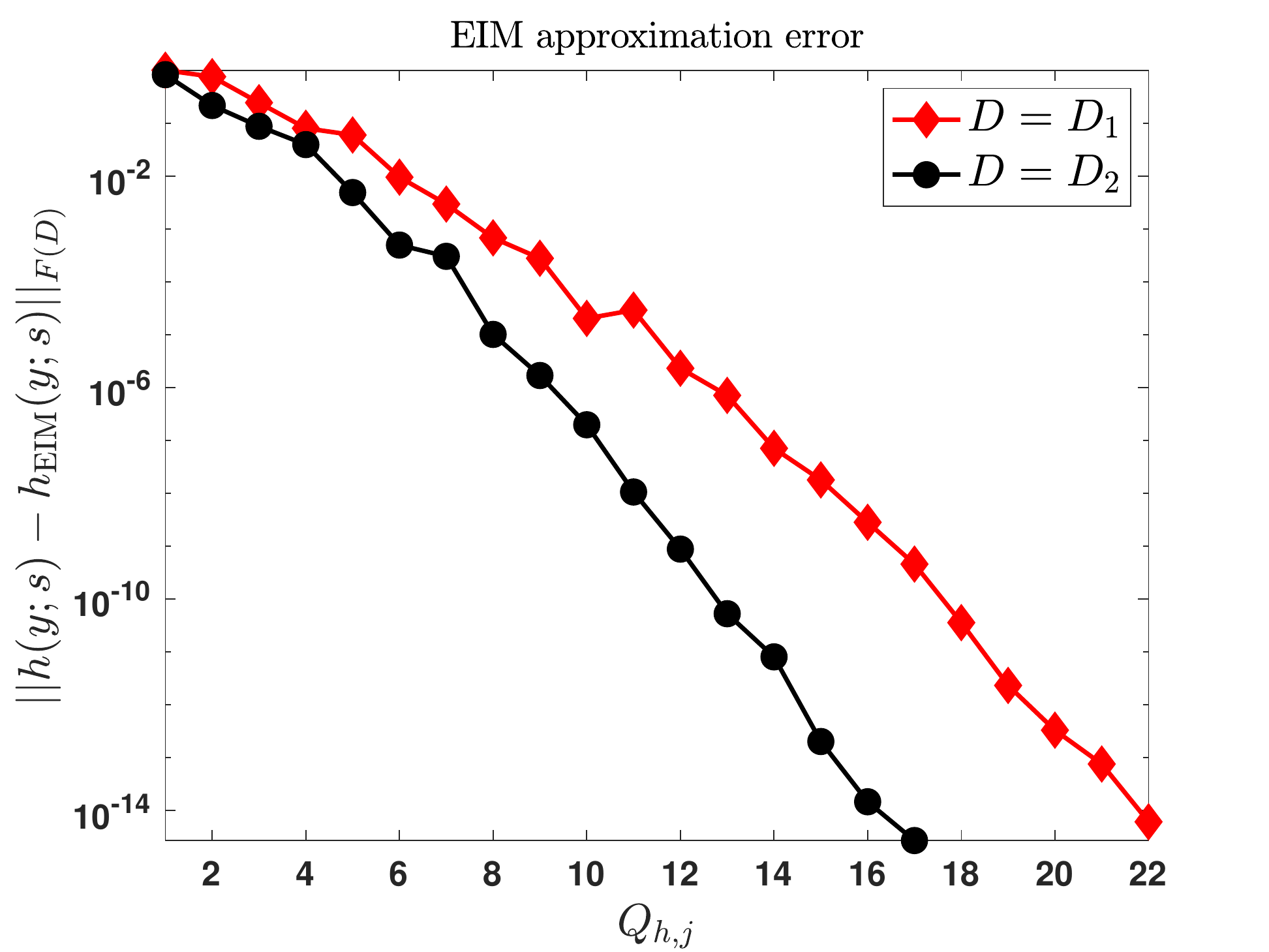}
\end{center}
\caption{EIM results for $h(y; s)$. Top: The median and range of function values $\{h(y; s): s \in D_1\}$ and $\{yh(y; s): s \in D_2\}$, and the first four EIM snapshots for each case. The shaded regions in the bottom left show the envelope of the functions $h(\cdot;s)$, and the markers show locations of the EIM interpolations points. That these interpolation points distribute evenly across the horizontal axis with a logarithmic scale demonstrate that our choice of graded mesh near $y = 0$ is necessary to obtain a good EIM approximation. Bottom right: EIM approximation error for the procedure \eqref{eq:heim-subintervals}. The norm on the space defined in \eqref{eq:F-def} is evaluated as the maximum over $2528$ points on $y \in [y_-,y_+]$ from the graded mesh and $1025$ equidistant points on $s \in D_i \subset (0,1)$. The exponential decay of error suggests that the solutions $\mathcal{V}^\calN$ and $\calU^\calN$ defined in \eqref{eq:VN-solution} and \eqref{eq:UN-solution} are proximal.}
\label{fig:eimresult}
\end{figure}

\subsection{Fractional Laplace problem}
\label{ssec:test1}

In this section we investigate model order reduction with the reduced basis method for the original problem \eqref{eq:frac_elliptic_pde}. This is equivalent to solving \eqref{eq:ppde} with $\nu$ having a fixed value. Thus, here we construct an RBM surrogate that allows quick evaluation of the map $s \mapsto u(\cdot;s)$. 
To compute parameter snapshots, we adopt the recently proposed residual-free approach \cite{JiangChenNarayan2018} to choose the RB parameter snapshots $\{\mu_1, \dots, \mu_N\}$. 
We adopt this approach instead of \eqref{eq:greedy} for simplicity; numerical results in \cite{JiangChenNarayan2018} show that the residual-free approach results in RBM approximation errors on par with residual-based approaches. Thus, instead of using \eqref{eq:greedy}, for our examples we choose $\mu_{n+1}$ via
\begin{align}\label{eq:greedy-rfree}
\mu_{n+1} = \argmax_{\mu \in D} \lVert \vec{c}(\mu)\rVert_{1},
\end{align}
i.e. the maximizer of the $L_1$-norm of the (vector of) RB Lagrange coefficients $c_n(\mu)$, where these coefficients are defined in \eqref{eq:rbm-def-and-certification}. We set
\begin{subequations}\label{eq:example1}
\begin{align}
  f(x, \nu) &= \sin(2 \pi x_1) \sin (2 \pi x_2), & x = (x_1, x_2) &\in \Omega,
\end{align}
i.e., $f$ is $\nu$-independent and we have $\mu = s$. The maximization \eqref{eq:greedy} is accomplished by replacing $D$ with a finite set. Specifically we set
\begin{align}
  D = D_1 \bigcup D_2 = [0.03, 0.5] \bigcup [0.5, 0.97],
\end{align}
\end{subequations}
and we discretize each set $D_j$, $j=1,2$, with $1025$ equi-spaced points. We collect the following data in order to plot an error metric for the RBM solution $\calU_N$:
\begin{subequations}\label{eq:error-metrics}
\begin{align}
    \mathcal{E}_N(D_j) &\coloneqq \left\{ e(z_i) \right\}_{i=1}^P = \left\{ \left\|  u^\calN(\cdot; z_i) - u_N(\cdot;z_i) \right\|_{L_2(\Omega)} \right\}_{i=1}^P, & j &= 1,2
\end{align}
with $\{z_i\}_{i=1}^P \subset D_j$ chosen as $P = 312$  equi-spaced points on $D_j$ with the first and last points removed (so that the set does not overlap with the set of $1,025$ training points for RBM). The error median, max, and minimum of the ensemble $\mathcal{E}_N$ is plotted in Figure \ref{fig:rbconv1d} as a function of $N$. Note that this error compares against the solution $\calU^\calN$ that is the solution of the affine-transformed problem \eqref{eq:UN-solution}. However, the original problem is the non-affine solution $\mathcal{V}^\calN$ defined in \eqref{eq:VN-solution}. Therefore, we will also compile the ensemble,
\begin{align}
  \mathcal{F}_N(D_j) &\coloneqq \left\{ e(z_i) \right\}_{i=1}^P = \left\{ \left\| v^\calN(\cdot; z_i) - u_N(\cdot;z_i) \right\|_{L_2(\Omega)} \right\}_{i=1}^P,& j &= 1,2, 
\end{align}
\end{subequations}
and report them in the same Figure \ref{fig:rbconv1d}.

\begin{figure}
\begin{center}
    \includegraphics[width=0.49\textwidth]{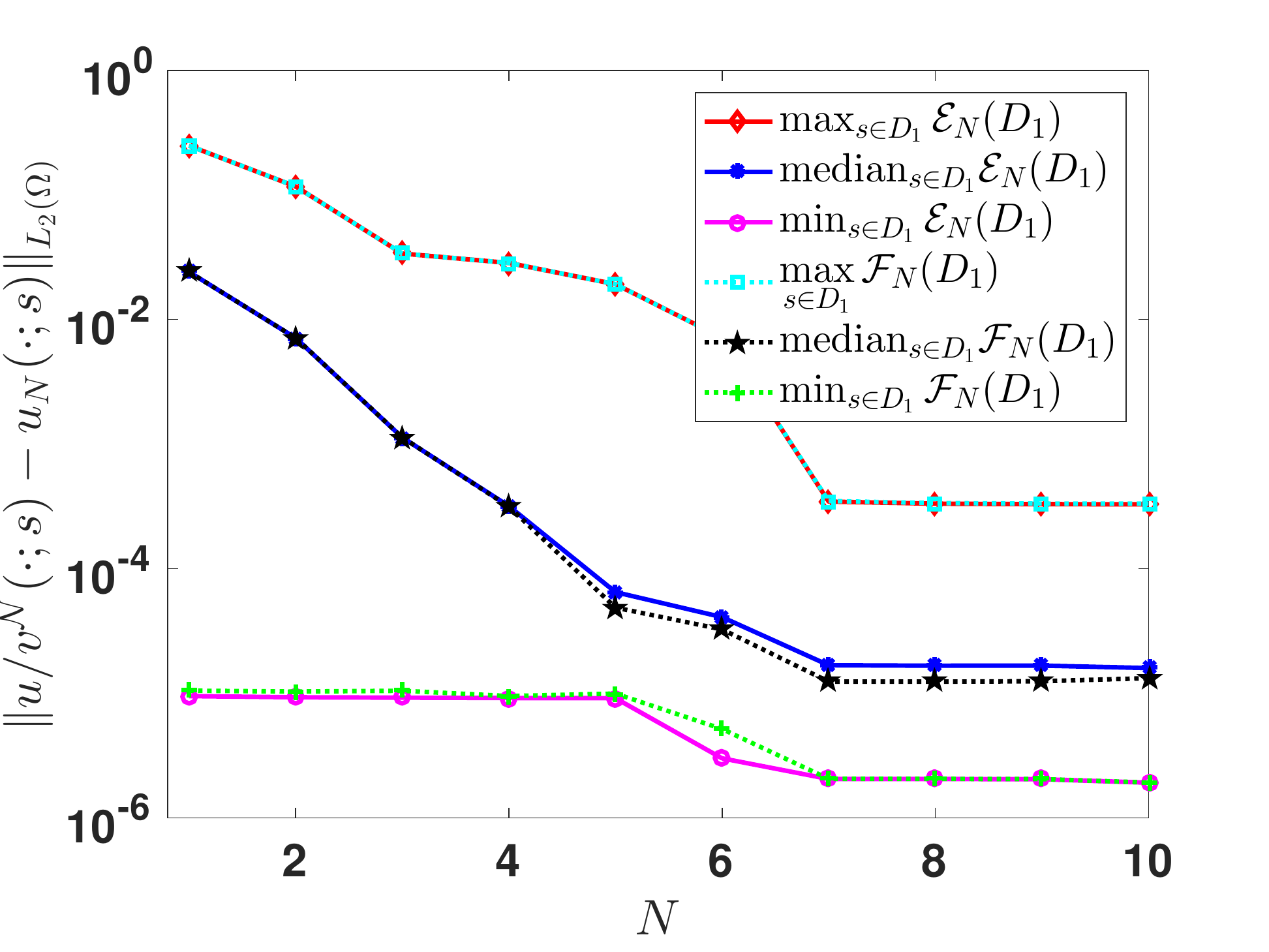}
    \includegraphics[width=0.49\textwidth]{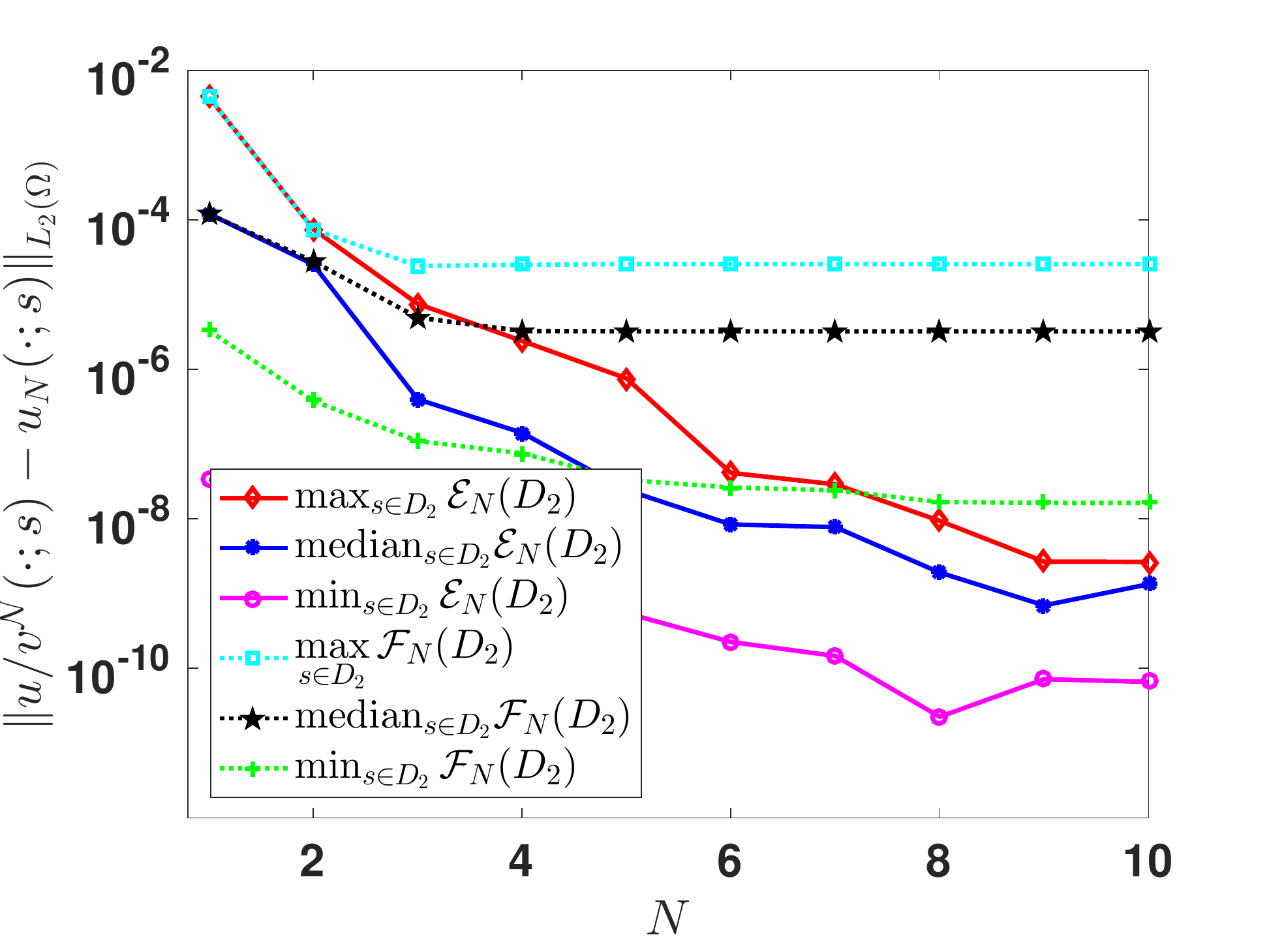}
\end{center}
\caption{Convergence of the RBM solution $\calU_N$ associated to problem \eqref{eq:example1}, where $\mu = s$ is a scalar. }
\label{fig:rbconv1d}
\end{figure}

To examine the efficiency of our scheme, we plot in Figure \ref{fig:timecomparison} the cumulative computation time for a total of $M$ queries of $u^\calN$ or $u_N$. For the $u^\calN$ this entails $M$ constructions of the finite element solution $\calU^\calN$ and subsequent restriction to the cylinder bottom. For $u_N$ this entails all computations for the one-time offline phase along with $M$ queries during the online phase and restriction to the cylinder bottom. We tabulate total cost for queries on $s \in D_1$ and $s \in D_2$ separately, and observe savings of well over 2 orders of magnitude at approximately $312$ queries. The speedup for marginal computing time (i.e., only online time) is well over four orders of magnitude when $10$ reduced basis functions are used. We also see that the RB offline time is negligible and it quickly becomes worthwhile to invest in the offline stage in the many-query setting.

Finally, we plot in Figure \ref{fig:Err_s} the RB errors as a function of parameter for $N = 2$ or $N = 7$ are used for $D_1$, and when $N = 1$ or $N = 3$ basis elements are used for $D_2$. We also mark the selected parameter values $\mu_j$. We see that the method is effective in producing an accurate surrogates over the whole domain using a very limited number of basis elements. 

\begin{figure}
\begin{center}
    \includegraphics[width=0.49\textwidth]{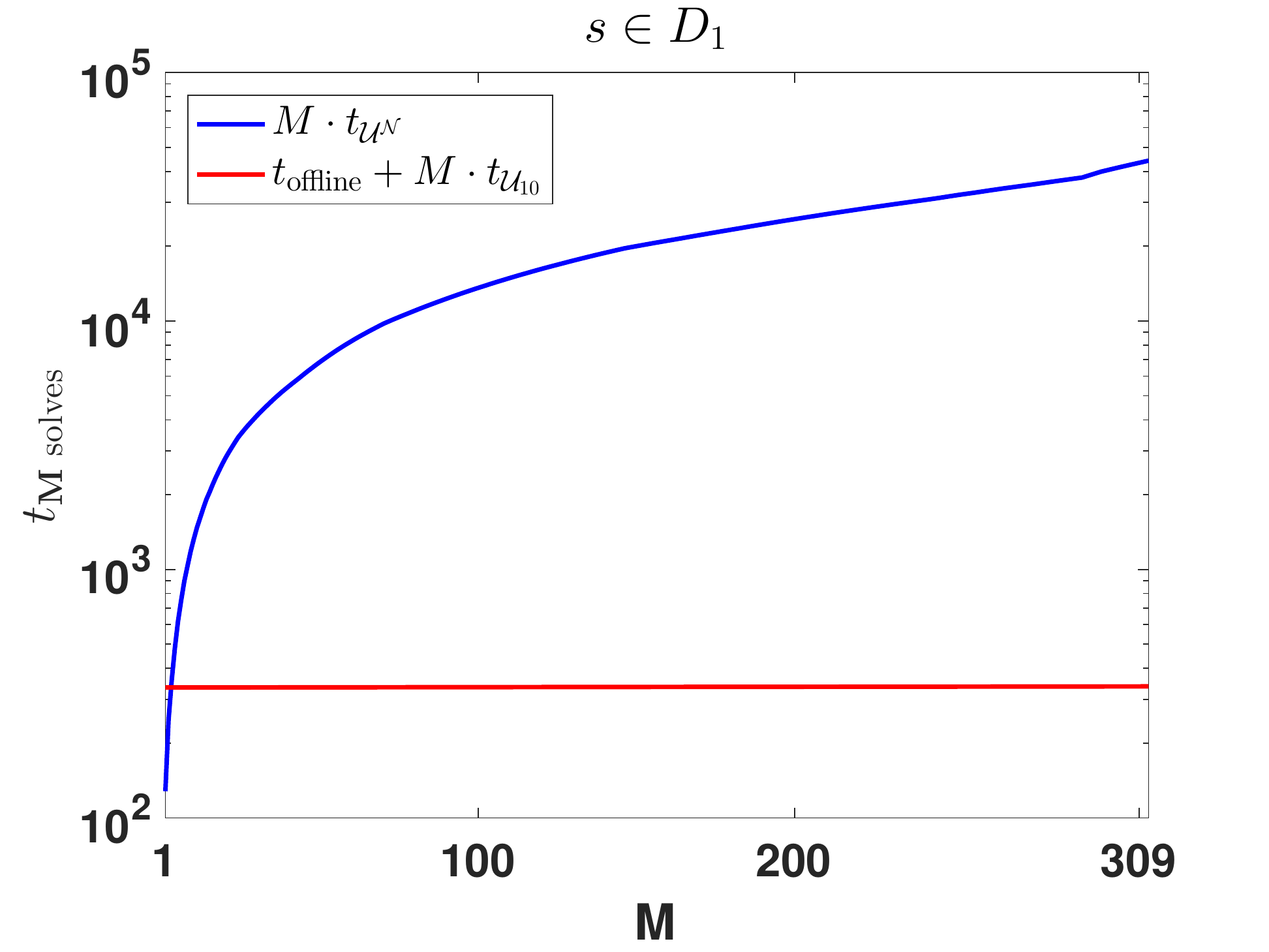}
    \includegraphics[width=0.49\textwidth]{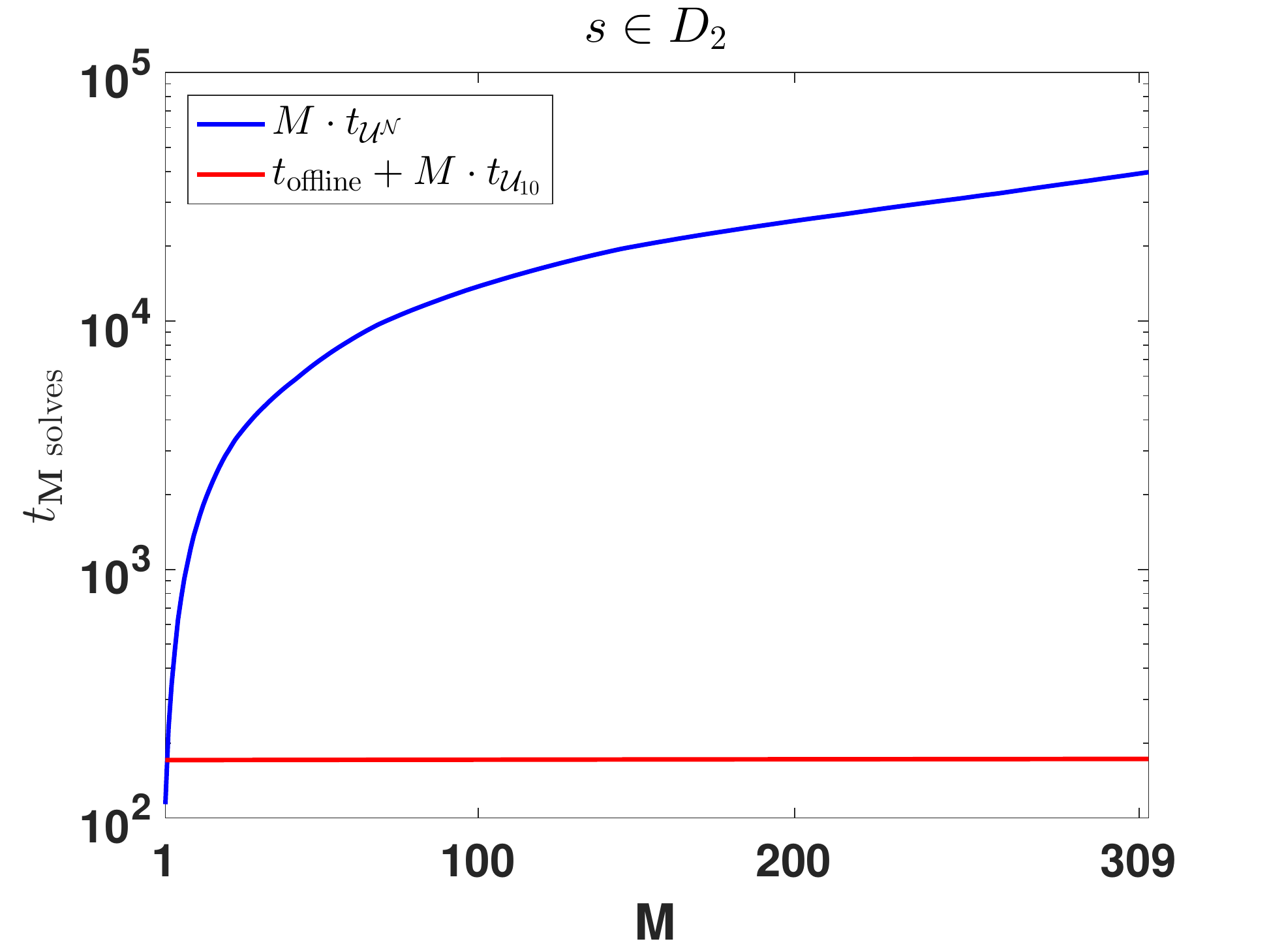}
\end{center}
\caption{The cumulative computation time for $M$ queries of the full order model $u^\calN$ and the RBM surrogate $u_N$. On the left is for the case $s \in D_1$ with $N = 7$; on the right, $s \in D_2$ with $N = 3$.}
\label{fig:timecomparison}
\end{figure}

\begin{figure}
\begin{center}
    \includegraphics[width=0.49\textwidth]{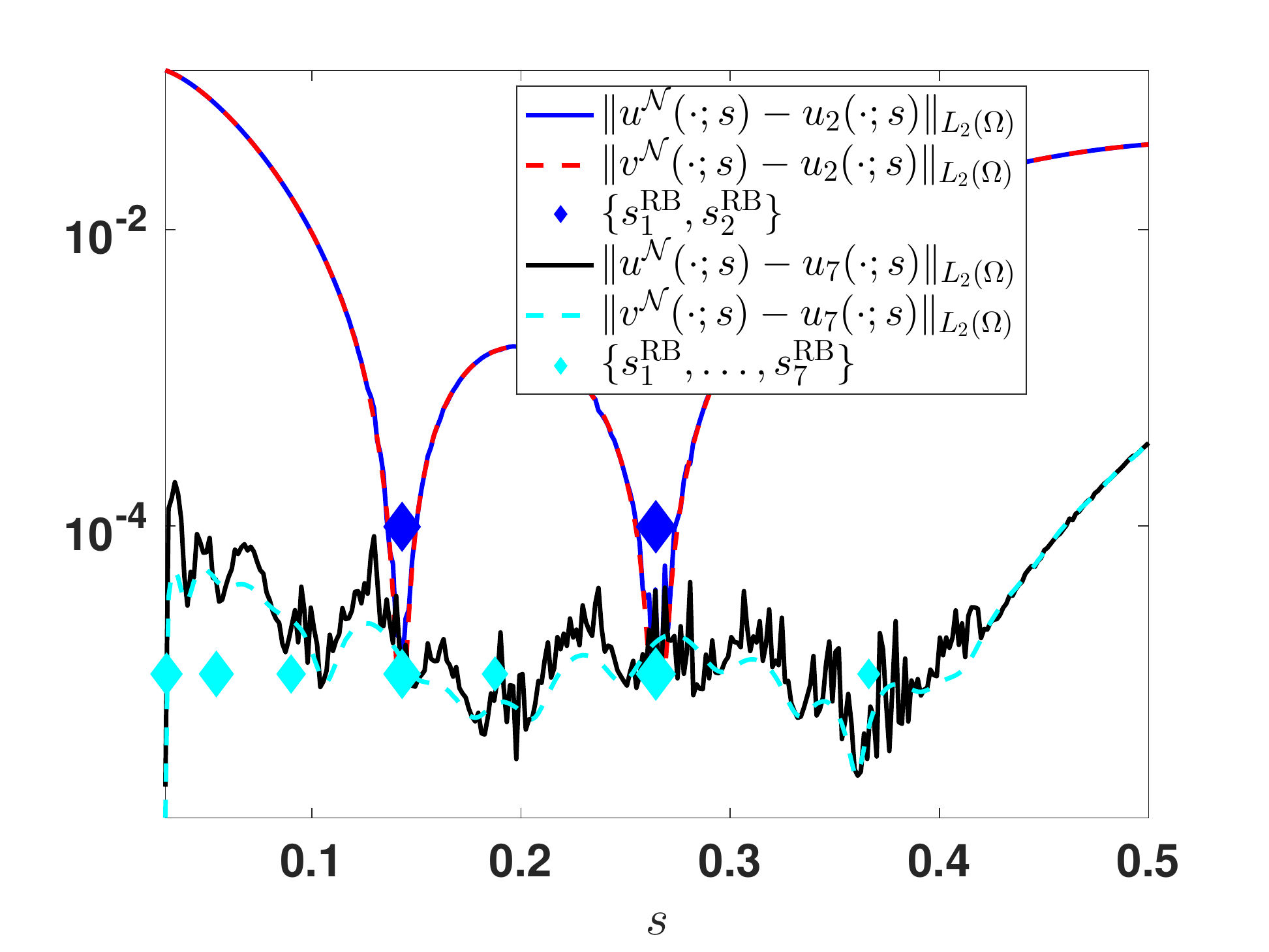}
    \includegraphics[width=0.49\textwidth]{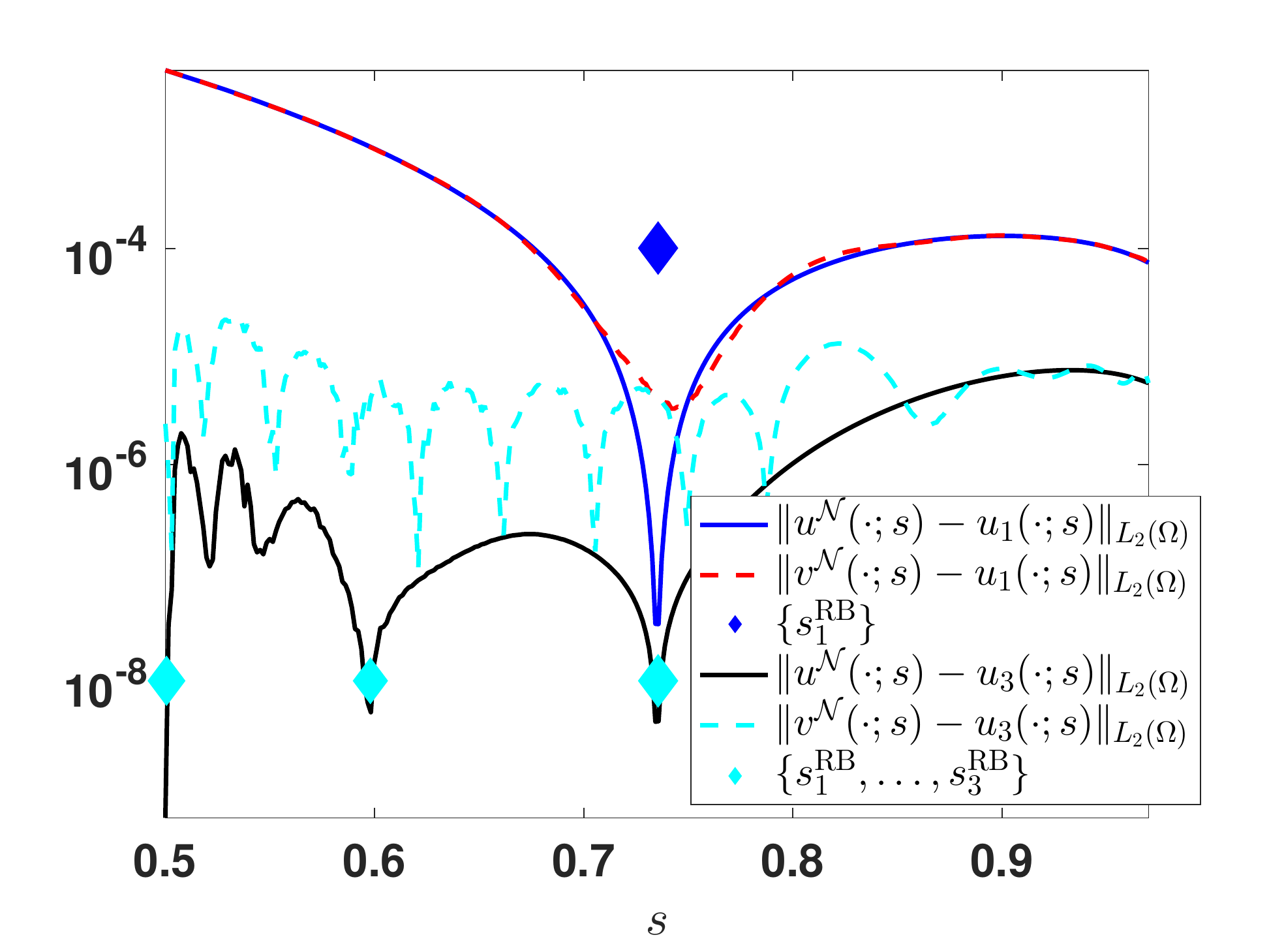}
\end{center}
\caption{RBM errors as a function of $s$ for different values of $N$.}
\label{fig:Err_s}
\end{figure}
We do observe in Figure \ref{fig:rbconv1d} that the error between the RBM surrogate $u_N$ and the restriction of $\calV^\calN$ to the cylinder base stagnates for $s \in D_2$. This is likely because we do not have $y$-uniform convergence on $[0, \infty)$ of our EIM approximation, and thus our simplified truth approximation $\calU^\calN$ retains a small discrepancy from the sought solution $\calV^\calN$. However, this stagnation occurs at relatively small values of the error, so that the RBM surrogate is still quite robust and efficient.

\subsection{Parameterized fractional Laplace problem}
For this test we take a two-dimensional parameter $\mu$:
\begin{subequations}\label{eq:example2}
\begin{align}
  \mu = (s, \nu) \in [0.03, 0.97] \times [0, 1],
\end{align}
with
\begin{align}
  f(x; \nu) = f_1(x) \nu^2 + f_2(x) (1 - \nu^2),
\end{align}
\end{subequations}
where the functions $f_1$ and $f_2$ are defined as,
\begin{align*}
  f_1(x) & = \sin(2 \pi x_1) \sin(2 \pi x_2), & f_2(x) & = \sin(3 \pi x_1) \sin(3 \pi x_2) e^{x_1 x_2}
\end{align*}
For the discrete set over which the optimization \eqref{eq:greedy-rfree} is performed, we choose a tensor-product grid, where grid in the $s$ variable is 257 equispaced points on each of $D_1$ and $D_2$, and the grid in the $\nu$ variable is 257 equispaced points on $[0,1]$. This results in a total of $66,049$ training points for the offline RBM procedure.

We again use the ensembles defined in \eqref{eq:error-metrics} to ascertain error of the RBM surrogate $u_N$, and choose the ensemble $\{z_j \}_{j=1}^P$ as $P = 900$ points, constructed as a tensorial grid with 30 points each dimension on the parameter domain. We remove points overlapping with the training set to compute errors. Error metrics for $u_N$ are shown in Figure \ref{fig:rbconv2d}. We observe similar behavior as in the previous section.

\begin{figure}
\begin{center}
    \includegraphics[width=0.49\textwidth]{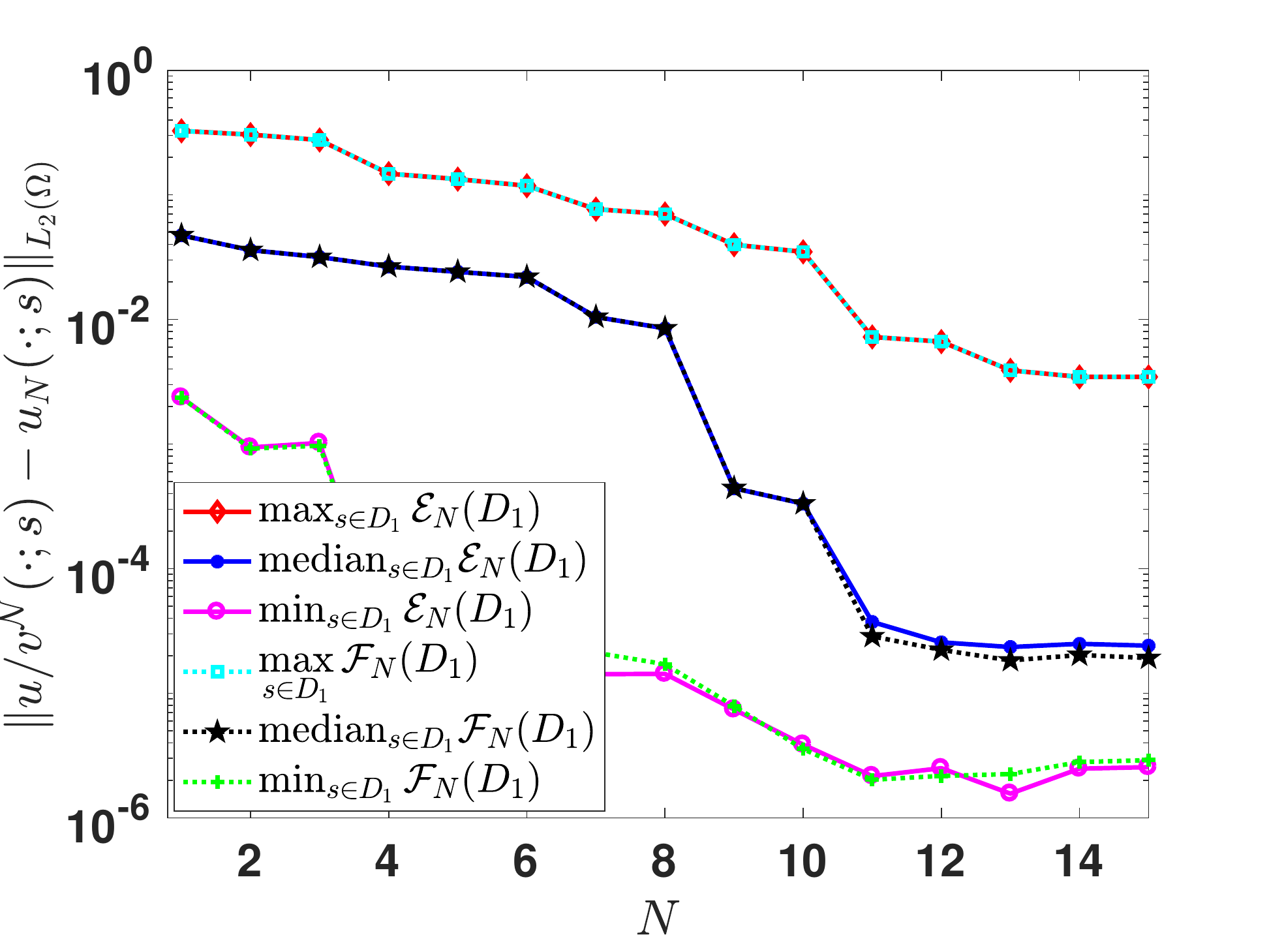}
    \includegraphics[width=0.49\textwidth]{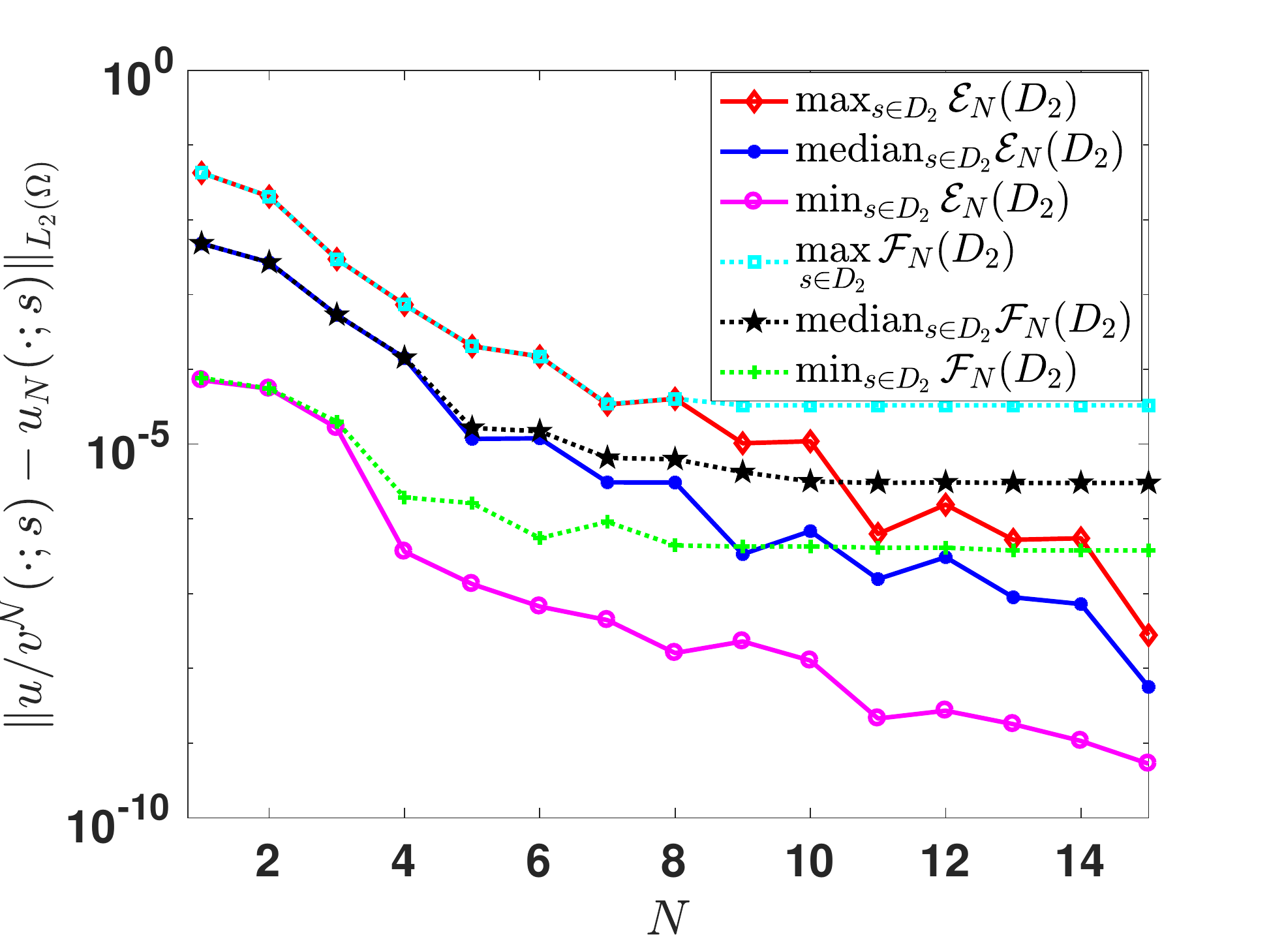}
\end{center}
\caption{Convergence of the RBM solution $\calU_N$ associated to problem \eqref{eq:example2}, where $\mu = (s,\nu)$.}
\label{fig:rbconv2d}
\end{figure}

\section{Conclusion}
We have provided a model-order reduction approach that enables fast evaluation of solutions to parameterized fractional Laplace PDEs where at least one of the parameters is the fractional exponent. Our strategy uses the reduced basis method to construct an efficient and accurate surrogate. RBM procedures do not apply ``out of the box'' to the fractional Laplace problem; in this paper we devised RBM algorithms that can address non-affine dependence of the fractional exponent in the PDE operator, as well as parameter-dependent norms in operator inf-sup conditions. Our numerical results demonstrate several orders of magnitude computational speedup over a standard finite element solver.

Our truth approximation for the RBM solver is a finite element method based on an extension solution to the fractional Laplace problem. The extension approach is seemingly cumbersome in contrast to a Dunford-Taylor solution approach, but has the significant advantage that it can be augmented to address very general nonlocal elliptic problems. We therefore expect this approach to be a cornerstone for future investigations in model order reduction for nonlocal problems.

\ifdefined\ARXIV
  \bibliographystyle{amsplain}
\else
  \bibliographystyle{siamplain}
\fi

\bibliography{akil,biblio,YChen_1,YChen_2}


\end{document}